\theoremstyle{plain}
\newtheorem{theorem}					{Theorem}[section]
\newtheorem{lemma}			[theorem]	{Lemma}
\newtheorem{proposition}	[theorem]	{Proposition}
\theoremstyle{definition}
\theoremstyle{remark}
\newtheorem*{exa*}						{Example}
\newtheorem*{rem*}						{Remark}
	\DeclareMathOperator	{\IR}		{\mathbb{R}}
	\DeclareMathOperator	{\IE}		{\mathbb{E}} 
\DeclareMathOperator	{\IP}		{\mathbb{P}}
\DeclarePairedDelimiter	\abs		{\lvert}	{\rvert}
\DeclarePairedDelimiter	\norm		{\lVert}	{\rVert}
\newcommand				{\eins}		{\text{$\mathbbm{1}$}}
\renewcommand 			{\epsilon}	{\varepsilon}
\renewcommand			{\phi}		{\varphi}
\renewcommand			{\tilde}	{\widetilde}
\numberwithin			{equation}{section}
\title{Some notes on concentration for $\alpha$-subexponential random variables}
\author		{Holger Sambale}
\address	{Faculty of Mathematics, Bielefeld University, Bielefeld, Germany}
\email{hsambale@math.uni-bielefeld.de}
\begin{document}
	
	\begin{abstract}
	We prove extensions of classical concentration inequalities for random variables which have $\alpha$-subexponential tail decay for any $\alpha \in (0,2]$. This includes Hanson--Wright type and convex concentration inequalities in various situations. In particular, we show uniform Hanson--Wright inequalities and convex concentration results for simple random tensors in the spirit of recent work by Klochkov--Zhivotovskiy \cite{KZ18} and Vershynin \cite{Ver19}.
	\end{abstract}
	\subjclass{Primary 60E15, 60F10, Secondary 46E30, 46N30}
	\keywords{Concentration of measure phenomenon, Orlicz norms, subexponential random variables, Hanson-Wright inequality, convex concentration}
	\date{\today}
	\maketitle
	
	\section{Introduction}
	
The aim of this note is to compile a number of smaller results which extend some classical as well as more recent concentration inequalities for bounded or sub-Gaussian random variables to random variables with heavier (but still exponential-type) tails. In detail, we shall consider random variables $X$ which satisfy
\begin{equation}\label{SubExpT}
\mathbb{P}(|X-\mathbb{E}X| \ge t) \le 2 \exp(-t^\alpha/C_{1,\alpha}^\alpha)
\end{equation}
for any $t \ge 0$, some $\alpha \in (0,2]$ and a suitable constant $C_{1,\alpha} > 0$. Such random variables are sometimes called $\alpha$-subexponential (for $\alpha = 2$, they are subgaussian) or sub-Weibull$(\alpha)$ (cf.\ \cite[Definition 2.2]{KC18}).

There are several equivalent reformulations of \eqref{SubExpT}, e.\,g.\ in terms of $L^p$ norms:
\begin{equation}\label{L^pChar}
    \lVert X \rVert_{L^p} \le C_{2,\alpha} p^{1/\alpha}
\end{equation}
for any $p \ge 1$. Another characterization is that these random variables have finite Orlicz norms of order $\alpha$, 
i.\,e.
\begin{equation}\label{ON}
C_{3,\alpha} \coloneqq \lVert X \rVert_{\Psi_\alpha} \coloneqq \inf \{t > 0 \colon \mathbb{E} \exp ((|X|/t)^\alpha) \le 2 \} < \infty.
\end{equation}
If $\alpha < 1$, $\lVert \cdot \rVert_{\Psi_\alpha}$ is actually a quasi-norm, however many norm-like properties (like a triangle-type inequality) can nevertheless be recovered up to $\alpha$-dependent constants (see e.\,g.\ \cite[Appendix A]{GSS19}). 
In fact, $C_{1,\alpha}$, $C_{2,\alpha}$ and $C_{3,\alpha}$ can be chosen such that they only differ by a constant $\alpha$-dependent factor.

Note that $\alpha$-subexponential random variables have log-convex (if $\alpha \le 1$) or log-concave (if $\alpha \ge 1$) tails, i.\,e.\ $t \mapsto - \log \mathbb{P}(\abs{X} \ge t)$ is convex or concave, respectively. For log-convex or log-concave measures, two-sided $L^p$ norm estimates for polynomial chaos (and as a consequence, concentration bounds) have been established over the last 25 years. In the log-convex case, results of this type have been derived for linear forms in \cite{HMO97} and for forms of any order in \cite{KL15,GSS19}. For log-concave measures, starting with linear forms again in \cite{GK95}, important contributions have been made in \cite{La96,La99,LL03,AL12}.

In this note, we mainly present four different results for functions of $\alpha$-subexponential random variables: a Hanson--Wright type inequality in Section \ref{Section:HWI}, a version of the convex concentration inequality in Section \ref{Section:convconc}, a uniform Hanson--Wright inequality in Section \ref{Section:unifHW} and finally a convex concentration inequality for simple random tensors in Section \ref{Section:AuxL}. These results are partly based on and generalize recent research, e.\,g.\ \cite{KZ18} and \cite{Ver19}. In fact, they partially build upon each other: for instance, in the proofs of Section \ref{Section:AuxL} we apply results both from Section \ref{Section:HWI} and Section \ref{Section:convconc}. A more detailed discussion is provided in each of the sections.

Finally, let us introduce some conventions which we will use in this paper.

\textbf{Notations.} If $X_1, \ldots, X_n$ is a sequence of random variables, we denote by $X = (X_1, \ldots, X_n)$ the corresponding random vector.
Moreover, we shall need the following types of norms throughout the paper:
\begin{itemize}
    \item the norms $\lVert x \rVert_p \coloneqq (\sum_{i=1}^n\abs{x_i}^p)^{1/p}$ for $x \in \mathbb{R}^n$,
    \item the $L^p$ norms $\lVert X \rVert_{L^p} := (\mathbb{E}|X|^p)^{1/p}$ for random variables $X$ (cf.\ \eqref{L^pChar}),
    \item the Orlicz (quasi-) norms $\lVert X \rVert_{\Psi_\alpha}$ as introduced in \eqref{ON},
    \item the Hilbert--Schmidt and operator norms $\lVert A \rVert_\mathrm{HS} \coloneqq (\sum_{i,j} a_{ij}^2)^{1/2}$, $\lVert A \rVert_\mathrm{op} \coloneqq \sup\{ \lVert Ax \rVert_2 \colon \lVert x \rVert_2 = 1\}$ for matrices $A = (a_{ij})$.
\end{itemize}
The constants appearing in this paper (typically denoted $C$ or $c$) may vary from line to line. Without subscript they are assumed to be absolute, if they depend on $\alpha$ (only) we shall write $C_\alpha$ or $c_\alpha$.

\textbf{Acknowledgements.}
This work was supported by the German Research Foundation (DFG) via CRC 1283 ``\textit{Taming uncertainty and profiting from randomness and low regularity in analysis, stochastics and their applications}''. The author would moreover like to thank Arthur Sinulis for carefully reading this paper and many fruitful discussions and suggestions.

\section{A generalized Hanson--Wright inequality}\label{Section:HWI}
Arguably, the most famous concentration result for quadratic form is the Hanson--Wright inequality, which first appeared in \cite{HW71}. We may state it as follows: assuming $X_1, \ldots, X_n$ are centered, independent random variables satisfying $\norm{X_i}_{\Psi_2} \le K$ for any $i$, and $A = (a_{ij})$ is a symmetric matrix, we have for any $t \ge 0$
	\[
	\IP\big(\abs{X^TAX - \IE X^TAX} \ge t\big) \le 2 \exp\Big( - \frac{1}{C} \min\Big( \frac{t^2}{K^4 \norm{A}_{\mathrm{HS}}^2}, \frac{t}{K^2 \norm{A}_{\mathrm{op}}} \Big)\Big).
	\]
For a modern proof, see \cite{RV13}, and for various developments, cf.\ \cite{HKZ12,VW15,Ad15,ALM18}.

In this note, we provide an extension of the Hanson--Wright inequality to random variables with bounded Orlicz norms of any order $\alpha \in (0,2]$. This complements the results in \cite{GSS19}, where the case of $\alpha \in (0,1]$ was considered, while for $\alpha = 2$, we get back the actual Hanson--Wright inequality.

\begin{theorem}\label{HWalpha}
	For any $\alpha \in (0,2]$, let $X_1, \ldots, X_n$ be independent, centered random variables such that $\norm{X_i}_{\Psi_\alpha} \le K$ for any $i$, and $A = (a_{ij})$ be a symmetric matrix. Then, for any $t \ge 0$,
	\[
	\IP\big(\abs{X^TAX - \IE X^TAX} \ge t\big) \le 2 \exp\Big( - \frac{1}{C_\alpha} \min\Big( \frac{t^2}{K^4 \norm{A}_{\mathrm{HS}}^2}, \Big( \frac{t}{K^2 \norm{A}_{\mathrm{op}}}\Big)^{\frac \alpha 2} \Big)\Big).
	\]
\end{theorem}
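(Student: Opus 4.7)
The proof follows the Rudelson--Vershynin strategy for the classical Hanson--Wright inequality \cite{RV13}, adapted to the $\alpha$-subexponential setting. The exponent $\alpha/2$ in the heavy-tail regime is dictated by the fact that products $X_i X_j$ behave as $\alpha/2$-subexponential, since $\norm{X_i^2}_{\Psi_{\alpha/2}} = \norm{X_i}_{\Psi_\alpha}^2 \le K^2$.

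I would first split the centered quadratic form into a diagonal and an off-diagonal piece, $D + R$, with $D = \sum_i a_{ii}(X_i^2 - \IE X_i^2)$ and $R = \sum_{i \ne j} a_{ij} X_i X_j$. Each summand of $D$ is independent, centered, and has $\Psi_{\alpha/2}$-norm $\lesssim K^2 |a_{ii}|$, so a standard Bernstein-type inequality for sums of independent $\alpha/2$-subexponential variables (as established in \cite{GSS19}) yields the required two-regime bound for $D$, using $\sum_i a_{ii}^2 \le \norm{A}_{\mathrm{HS}}^2$ and $\max_i |a_{ii}| \le \norm{A}_{\mathrm{op}}$.

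For the off-diagonal part $R$, apply a standard decoupling inequality (de la Pe\~na--Montgomery-Smith / Bourgain--Tzafriri) to obtain, up to universal constants in moments, $R' = \sum_{i,j} a_{ij} X_i X_j'$ with $(X_j')$ an independent copy. Conditional on $X$, one has $R' = \langle v, X'\rangle$ with $v_j = \sum_i a_{ij} X_i$, and the generalized Bernstein moment bound for weighted sums of independent $\alpha$-subexponentials gives
\[
\norm{R'}_{L^p(X')} \lesssim K\bigl(\sqrt{p}\,\norm{v}_2 + p^{1/\alpha}\,\norm{v}_\infty\bigr).
\]
Taking $L^p$ in $X$ and combining (i) a quadratic-form moment bound for $\norm{v}_2^2 = X^T A A^T X$ (via a bootstrap together with Cauchy--Schwarz, giving $\norm{\norm{v}_2}_{L^p} \lesssim K(\norm{A}_{\mathrm{HS}} + \sqrt{p}\,\norm{A}_{\mathrm{op}})$) with (ii) a coordinatewise estimate on $\norm{v}_\infty$ using that each $v_j$ is itself $\alpha$-subexponential with $\norm{v_j}_{\Psi_\alpha} \lesssim K\norm{A}_{\mathrm{op}}$, the two $p^{1/\alpha}$ factors compound to yield
\[
\norm{R'}_{L^p} \lesssim K^2 \bigl(\sqrt{p}\,\norm{A}_{\mathrm{HS}} + p^{2/\alpha}\,\norm{A}_{\mathrm{op}}\bigr),
\]
and the standard optimization $\IP(|R'| \ge e\norm{R'}_{L^p}) \le e^{-p}$ converts this into the advertised tail bound with exponent $\alpha/2$.

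The main obstacle is the bookkeeping in the last step: tracking how two compounded layers of $\Psi_\alpha$-behavior produce exactly the $p^{2/\alpha}$ moment growth while keeping all constants uniform in the dimension (in particular, avoiding spurious $\log n$ factors in $\norm{\norm{v}_\infty}_{L^p}$, which can be handled by the estimate $\norm{\max_j |v_j|}_{L^p} \le n^{1/p} \max_j \norm{v_j}_{L^p}$ restricted to the relevant regime $p \gtrsim \log n$ where the heavy-tail term dominates anyway). For $\alpha \in (0,1]$ this is essentially carried out in \cite{GSS19}; the genuinely new range $\alpha \in (1,2)$ can be treated either by the Bernstein route above or, alternatively, by invoking general moment estimates for second-order chaos from \cite{La99,AL12}.
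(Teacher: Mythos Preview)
Your overall architecture (diagonal/off-diagonal split, decoupling, moment bound $\norm{\cdot}_{L^p}\lesssim K^2(\sqrt{p}\norm{A}_{\mathrm{HS}}+p^{2/\alpha}\norm{A}_{\mathrm{op}})$, then tail optimization) matches the paper's. The diagonal part is handled identically, via a Bernstein inequality for $\alpha/2$-subexponential summands.

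For the off-diagonal part the paper takes your ``alternative'' route, not your primary one. After decoupling it compares $X,X'$ to independent symmetrized Weibull$(\alpha)$ variables and invokes the two-sided $L^p$ bounds for order-2 chaos in log-concave variables from \cite{AL12}. The substance of the proof is then an explicit computation of the Adamczak--Lata{\l}a norms $\norm{A}_{\{1,2\},p}^{\mathcal N}$ and $\norm{A}_{\{\{1\},\{2\}\},p}^{\mathcal N}$ for the function $\hat N(t)=\min(t^2,|t|^\alpha)$, showing that both are dominated by $C(\sqrt{p}\norm{A}_{\mathrm{HS}}+p^{2/\alpha}\norm{A}_{\mathrm{op}})$.

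Your primary Rudelson--Vershynin-style route has a gap at the step you flag yourself: the bound $\norm{\norm{v}_2}_{L^p}\lesssim K(\norm{A}_{\mathrm{HS}}+\sqrt{p}\,\norm{A}_{\mathrm{op}})$. In \cite{RV13} this is obtained by a subgaussian-to-Gaussian comparison followed by rotation invariance and Gaussian Lipschitz concentration; that mechanism does not carry over to $\alpha<2$. Your ``bootstrap'' suggestion amounts to applying the very Hanson--Wright bound you are proving to $X^TA^TAX$, which is circular unless you set up and close a genuine fixed-point argument in the constant, which you have not done. The naive non-circular estimate $\norm{\norm{AX}_2^2}_{L^{p/2}}\le\sum_j\norm{v_j}_{L^p}^2$ produces an extra $p^{1/\alpha}$ factor in front of $\norm{A}_{\mathrm{HS}}$ and hence a subgaussian term of order $p^{1/2+1/\alpha}\norm{A}_{\mathrm{HS}}$, which is too weak. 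The $\norm{v}_\infty$ part of your argument, by contrast, is fine. So as written, the Bernstein route does not close for $\alpha\in(1,2)$; falling back to \cite{AL12} (as you note) is essentially what the paper does.
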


Theorem \ref{HWalpha} generalizes and implies a number of inequalities for quadratic forms in $\alpha$-subexponential random variables (in particular for $\alpha = 1$) which are spread throughout the literature.
For a detailed discussion, see \cite[Remark 1.7]{GSS19}. Note that it is possible to sharpen the tail estimate given by Theorem \ref{HWalpha}, cf.\ e.\,g.\ \cite[Corollary 1.4]{GSS19} for $\alpha \in (0,1]$ or \cite[Theorem 3.2]{AL12} for $\alpha \in [1,2]$ (in fact, the proof of Theorem \ref{HWalpha} works by evaluating the family of norms used therein). The main benefit of Theorem \ref{HWalpha} is that it uses norms which are easily calculable and in many situations already sufficient for applications. 

Before we give the proof of Theorem \ref{HWalpha}, let us briefly mention that for the standard Hanson--Wright inequality, a number of selected applications can be found in \cite{RV13}. Some of them were generalized to $\alpha$-subexponential random variables with $\alpha \le 1$ in \cite{GSS19}, and it is no problem to extend these proofs to any order $\alpha \in (0,2]$ using Theorem \ref{HWalpha}. Here, we just focus on a single example which yields a concentration result for the Euclidean norm of a linear transformation of a vector $X$ having independent components with bounded Orlicz norms around the Hilbert--Schmidt norm of the transformation matrix. This is a variant and extension of \cite[Proposition 2.1]{GSS19} and will be applied in Section \ref{Section:AuxL}.

\begin{proposition}\label{proposition:EuclideanNormVector}
	Let $X_1, \ldots, X_n$ be independent, centered random variables such that $\IE X_i^2 = 1$ and $\norm{X_i}_{\Psi_\alpha} \le K$ for some $\alpha \in (0,2]$, and let $B \neq 0$ be an $m \times n$ matrix. For any $t \ge 0$ we have
	\begin{equation}\label{ENVr}
    \IP(\abs{\norm{BX}_2 - \norm{B}_{\mathrm{HS}}} \ge t K^2 \norm{B}_{\mathrm{op}}) \le 2\exp(- t^\alpha/C_\alpha ).
	\end{equation}
	In particular, for any $t \ge 0$ it holds
    \begin{equation}\label{nov}
        \IP( \abs{\norm{X}_2 - \sqrt{n}} \ge tK^2) \le 2\exp(- t^\alpha/C_\alpha).
    \end{equation}
\end{proposition}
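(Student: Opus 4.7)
The plan is to deduce \eqref{ENVr} from Theorem~\ref{HWalpha} applied to the quadratic form $\|BX\|_2^2 = X^T(B^TB)X$. Since the $X_i$ are centered with unit variance, $\mathbb{E}[X^T(B^TB)X] = \mathrm{tr}(B^TB) = \|B\|_{\mathrm{HS}}^2$. Combined with the singular-value identities $\|B^TB\|_{\mathrm{op}} = \|B\|_{\mathrm{op}}^2$ and $\|B^TB\|_{\mathrm{HS}} \le \|B\|_{\mathrm{op}}\|B\|_{\mathrm{HS}}$, the theorem applied with $A = B^TB$ would yield
\[
\mathbb{P}\bigl(\bigl|\|BX\|_2^2 - \|B\|_{\mathrm{HS}}^2\bigr| \ge u\bigr) \le 2\exp\Bigl(-\tfrac{1}{C}\min\Bigl(\tfrac{u^2}{K^4\|B\|_{\mathrm{op}}^2\|B\|_{\mathrm{HS}}^2},\,\bigl(\tfrac{u}{K^2\|B\|_{\mathrm{op}}^2}\bigr)^{\alpha/2}\Bigr)\Bigr)
\]
for every $u \ge 0$.

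To turn this into a bound on $|\|BX\|_2 - \|B\|_{\mathrm{HS}}|$, I would invoke the elementary pointwise implication, valid for $z, a \ge 0$,
\[
|z - a| \ge s \implies |z^2 - a^2| \ge s\,\max(s, a),
\]
which is immediate from $z + a \ge \max(s, a)$ in each of the two cases $z \ge a + s$ and $a - s \ge z \ge 0$. Taking $z = \|BX\|_2$, $a = \|B\|_{\mathrm{HS}}$, $s = tK^2\|B\|_{\mathrm{op}}$ and substituting $u = s\max(s, a)$ in the displayed bound then translates the quadratic-form estimate into one for $|z - a|$.

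A short case distinction would conclude. If $s \le a$, the subgaussian term simplifies to $t^2$, and the case condition $\|B\|_{\mathrm{HS}}/\|B\|_{\mathrm{op}} \ge tK^2$ forces the $\alpha$-exponential term to be at least $(t^2K^2)^{\alpha/2} = K^\alpha t^\alpha$; if $s \ge a$, the same two lower bounds $t^2$ and $K^\alpha t^\alpha$ arise after using $\|B\|_{\mathrm{op}} \le \|B\|_{\mathrm{HS}}$ once more. The factor $K^\alpha$ is absorbed into an $\alpha$-dependent constant because $\mathbb{E}X_i^2 = 1$ forces $K \ge c_\alpha > 0$ via the Orlicz--$L^p$ comparison recalled in the introduction. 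For $t$ below some $\alpha$-dependent threshold the claim is trivial since $2\exp(-ct^\alpha) \ge 1$ after shrinking $c$; above the threshold $t^2 \ge t^\alpha$, so both branches of the $\min$ are $\ge c\,t^\alpha$ and \eqref{ENVr} follows. The bound \eqref{nov} is then the special case $B = \Id_n$, for which $\|B\|_{\mathrm{HS}} = \sqrt n$ and $\|B\|_{\mathrm{op}} = 1$.

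The main subtlety is the passage from $\|BX\|_2^2$ back to $\|BX\|_2$: the cruder substitute $|z - a| \le |z^2 - a^2|/a$ alone covers only the regime $s \le a$ and would degrade the $\alpha$-exponential rate to $t^{\alpha/2}$, thereby missing precisely the sharpening over \cite[Proposition~2.1]{GSS19} that is being claimed.
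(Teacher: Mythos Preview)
Your proposal is correct and follows essentially the same route as the paper: apply Theorem~\ref{HWalpha} to $A=B^TB$, then pass from $\|BX\|_2^2$ to $\|BX\|_2$ via the pointwise implication $|z-a|\ge s \Rightarrow |z^2-a^2|\ge s\max(s,a)$. The paper simply normalizes to $\|B\|_{\mathrm{HS}}=1$ first (so $a=1$) and handles the small-$t$ regime via \eqref{from2toalpha} rather than a threshold argument; one cosmetic point is that in your case $s\ge a$ the lower bound $t^2$ for the subgaussian term comes from the case condition $tK^2\ge \|B\|_{\mathrm{HS}}/\|B\|_{\mathrm{op}}$ rather than from $\|B\|_{\mathrm{op}}\le\|B\|_{\mathrm{HS}}$ as you wrote, but the computation is correct.
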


For the proofs, let us recall some elementary relations which we will use throughout the paper to adjust the constants in the tail bounds we derive.

\textbf{Adjusting constants.}
For any two constants $C_1 > C_2 > 1$ we have for all $r \ge 0$ and $C > 0$
\begin{equation}\label{eqn:constantAdjustment} 
C_1 \exp(-r/C) \le C_2 \exp\Big(-\frac{\log(C_2)}{C\log(C_1)} r\Big)
\end{equation}
whenever the left hand side is smaller or equal to $1$ (cf.\ e.\,g.\ \cite[Eq.\ (3.1)]{SS19}). Moreover, for any $\alpha \in (0,2)$, any $\gamma > 0$ and all $t \ge 0$, we may always estimate
\begin{equation}\label{from2toalpha}
    \exp(-(t/C)^2) \le 2 \exp(-(t/C')^\alpha),
\end{equation}
using $\exp(-s^2) \le \exp(1-s^\alpha)$ for any $s > 0$ and \eqref{eqn:constantAdjustment}. More precisely, we may choose $C' := C/\log^{1/\alpha}(2)$. Note that strictly speaking, the range of $t/C \le 1$ is not covered by \eqref{eqn:constantAdjustment}, however in this case (in particular, choosing $C'$ as suggested) both sides of \eqref{from2toalpha} are at least $1$ anyway so that the right hand side still provides a valid upper bound for any probability.

Let us now turn to the proof of Theorem \ref{HWalpha}. In what follows, we actually show that for any $p \ge 2$,
	\begin{equation}\label{wast}
	\norm{X^TAX - \IE X^TAX}_{L^p} \le C_\alpha K^2 \big(p^{1/2} \norm{A}_{\mathrm{HS}} + p^{2/\alpha} \norm{A}_{\mathrm{op}} \big).
	\end{equation}
From here, Theorem \ref{HWalpha} follows by standard means (cf.\ \cite[Proof of Theorem 3.6]{SS18}). Moreover, we may restrict ourselves to $\alpha \in (1,2]$, since the case of $\alpha \in (0,1]$ has been proven in \cite{GSS19}.

\begin{proof}[Proof of Theorem \ref{HWalpha}]
	First we shall treat the off-diagonal part of the quadratic form. Let $w^{(1)}_i, w^{(2)}_i$ be independent (of each other as well as of the $X_i$) symmetrized Weibull random variables with scale $1$ and shape $\alpha$, i.\,e.\ $w^{(j)}_i$ are symmetric random variables with $\mathbb{P}(\abs{w^{(j)}_i} \ge t) = \exp(-t^\alpha)$. In particular, the $w^{(j)}_i$ have logarithmically concave tails.
	
	Using standard decoupling and symmetrization arguments (cf.\ \cite[Theorem 3.1.1 \& Lemma 1.2.6]{PG99}) as well as \cite[Theorem 3.2]{AL12} in the second inequality,
	for any $p \ge 2$ it holds
	\begin{equation} \label{eqn:LpInequality}
	\norm{\sum_{i \neq j} a_{ij} X_i X_j}_{L^p} \le C_\alpha K^2 \norm{\sum_{i \neq j} a_{ij} w^{(1)}_i w^{(2)}_j}_{L^p} \le C_\alpha K^2(\norm{A}_{\{1,2\},p}^{\mathcal{N}} + \norm{A}_{\{\{1\},\{2\}\}, p}^{\mathcal{N}}),
	\end{equation}
	where the norms $\norm{A}_{\mathcal{J},p}^{\mathcal{N}}$ are defined as in \cite{AL12}. Instead of repeating the general definitions, we will only focus on the case we need in our situation. Indeed, for the symmetric Weibull distribution with parameter $\alpha$ we have (again, in the notation of \cite{AL12}) $N(t) = t^\alpha$, and so for $\alpha \in (1,2]$, it follows that $\hat{N}(t) = \min(t^2, \abs{t}^\alpha)$. Hence, the norms can be written as follows:
	\begin{align*}
	\norm{A}_{\{1,2\},p}^{\mathcal{N}} &= 2 \sup\big\lbrace \sum_{i,j} a_{ij} x_{ij} : \sum_{i = 1}^n \min\big(\sum_{j} x_{ij}^2, \big(\sum_{j} x_{ij}^2\big)^{\alpha/2}\big) \le p \big\rbrace,\\
	\norm{A}_{\{\{1\},\{2\}\}, p}^{\mathcal{N}} &= \sup\big\lbrace \sum_{i,j} a_{ij} x_i y_j : \sum_{i = 1}^n \min(x_i^2, \abs{x_i}^{\alpha}) \le p, \sum_{j = 1}^n \min(y_j^2, \abs{y_j}^{\alpha}) \le p \big\rbrace.
	\end{align*}
	Before continuing with the proof, we next introduce a lemma which will help to rewrite the norms in a more tractable form.
\end{proof}

\begin{lemma}
	For any $p \ge 2$ define 
	\begin{align*}
	I_1(p) &\coloneqq \big\lbrace x  = (x_{ij}) \in \IR^{n \times n} : \sum_{i = 1}^n \min\big( \big( \sum_{j = 1}^n x_{ij}^2\big)^{\alpha/2}, \sum_{j = 1}^n x_{ij}^2 \big) \le p \big\rbrace,\\
	I_2(p) &\coloneqq \big\lbrace x_{ij} = z_i y_{ij} \in \IR^{n \times n} : \sum_{i = 1}^n \min(\abs{z_i}^\alpha, z_i^2) \le p, \max_{i = 1,\ldots,n} \sum_{j = 1}^n y_{ij}^2 \le 1 \big\rbrace.
	\end{align*}
	Then $I_1(p) = I_2(p)$.
\end{lemma}

\begin{proof}
	The inclusion $I_1(p) \supseteq I_2(p)$ is an easy calculation, and the inclusion $I_1(p) \subseteq I_2(p)$ follows by defining $z_i = \norm{(x_{ij})_j}$ and $y_{ij} = x_{ij} / \norm{(x_{ij})_j}$ (or $0$, if the norm is zero).
\end{proof}

\begin{proof}[Proof of Theorem \ref{HWalpha}, continued]
	For brevity, for any matrix $A = (a_{ij})$ let us write $\norm{A}_m \coloneqq \max_{i = 1,\ldots,n} ( \sum_{j = 1}^n a_{ij}^2 )^{1/2}$. Note that clearly, $\norm{A}_m \le \norm{A}_{\mathrm{op}}$.
	
	Now, fix some vector $z \in \IR^n$ such that $\sum_{i = 1}^n \min(\abs{z_i}^\alpha, z_i^2) \le p$. The condition also implies
	\[
	p \ge \sum_{i = 1}^n \abs{z_i}^\alpha \eins_{\{\abs{z_i} > 1\}} + \sum_{i= 1}^n z_i^2 \eins_{\{\abs{z_i} \le 1\}} \ge \max\Big( \sum_{i=1}^n z_i^2 \eins_{\{\abs{z_i} \le 1\}}, \sum_{i = 1}^n \abs{z_i} \eins_{\{\abs{z_i} > 1\}} \Big),
	\]
	where in the second step we used $\alpha \in [1,2]$ to estimate $\abs{z_i}^\alpha \eins_{\{\abs{z_i} > 1\}} \ge \abs{z_i} \eins_{\{\abs{z_i} > 1\}}$. So, given any $z$ and $y$ satisfying the conditions of $I_2(p)$, we can write
	\begin{align*}
	\abs{\sum_{i,j} a_{ij} z_i y_{ij}} &\le \sum_{i = 1}^n \abs{z_i} \big( \sum_{j = 1}^n a_{ij}^2 \big)^{1/2} \big( \sum_{j = 1}^n y_{ij}^2 \big)^{1/2} \le \sum_{i = 1}^n \abs{z_i} \big( \sum_{j = 1}^n a_{ij}^2 \big)^{1/2} \\
	&\le \sum_{i = 1}^n \abs{z_i} \eins_{\{\abs{z_i} \le 1\}} \big( \sum_{j = 1}^n a_{ij}^2 \big)^{1/2} + \sum_{i = 1}^n \abs{z_i} \eins_{\{\abs{z_i} > 1\}} \big( \sum_{j =1 }^n a_{ij}^2 \big)^{1/2} \\
	&\le \norm{A}_{\mathrm{HS}} \big(\sum_{i = 1}^n z_i^2 \eins_{\{\abs{z_i} \le 1\}}\big)^{1/2} + \norm{A}_m \sum_{i = 1}^n \abs{z_i} \eins_{\{\abs{z_i} > 1\}}.
	\end{align*} So, this yields
	\begin{equation} \label{eqn:norm1}
	\norm{A}_{\{1,2\},p}^{\mathcal{N}} \le 2p^{1/2} \norm{A}_{\mathrm{HS}} + 2p \norm{A}_m \le 2p^{1/2} \norm{A}_{\mathrm{HS}} + 2p \norm{A}_{\mathrm{op}}.
	\end{equation}
	
	As for $\norm{A}_{\{\{1\},\{2\}\}, p}^{\mathcal{N}}$, we can use the decomposition $z = z_1 + z_2$, where $(z_1)_i = z_i \eins_{\{\abs{z_i} > 1\}}$ and $z_2 = z - z_1$, and obtain
	\begin{align*}
	\norm{A}_{\{\{1\},\{2\}\}, p}^{\mathcal{N}} &\le \sup\big\lbrace \sum_{ij} a_{ij} (x_1)_i (y_1)_j : \norm{x_1}_\alpha \le p^{1/\alpha}, \norm{y_1}_\alpha \le p^{1/\alpha} \big\rbrace \\ &+ 2 \sup\big\lbrace \sum_{ij} a_{ij} (x_1)_i (y_2)_j : \norm{x_1}_\alpha \le p^{1/\alpha}, \norm{y_2}_2 \le p^{1/2} \big\rbrace \\
	&+ \sup\big\lbrace\sum_{ij} a_{ij} (x_2)_i (y_2)_j : \norm{x_2}_2 \le p^{1/2}, \norm{y_2}_2 \le p^{1/2} \big\rbrace \\
	&= p^{2/\alpha} \sup\lbrace \ldots \rbrace + 2p^{1/\alpha + 1/2} \sup\{\ldots\} + p \norm{A}_{\mathrm{op}}
	\end{align*}
	(in the braces, the conditions $\norm{\cdot}_\beta \le p^{1/\beta}$ have been replaced by $\norm{\cdot}_\beta \le 1$). Clearly, since $\norm{x_1}_\alpha \le 1$ implies $\norm{x_1}_2 \le 1$ (and the same for $y_1$), all of the norms can be upper bounded by $\norm{A}_{\mathrm{op}}$, i.\,e.\ we have
	\begin{equation} \label{eqn:norm2}
	\norm{A}_{\{\{1\},\{2\}\}, p}^{\mathcal{N}} \le (p^{2/\alpha} + 2p^{1/\alpha + 1/2} + p) \norm{A}_{\mathrm{op}} \le 4p^{2/\alpha} \norm{A}_{\mathrm{op}},
	\end{equation}
	where the last inequality follows from $p \ge 2$ and $1/2 \le 1/\alpha \le 1 \le (\alpha+2)/(2\alpha) \le 2/\alpha$.
	
	Combining the estimates \eqref{eqn:LpInequality}, \eqref{eqn:norm1} and \eqref{eqn:norm2} yields
	\begin{align*}
	\norm{\sum_{i,j} a_{ij} X_i X_j}_{L^p} \le C_\alpha K^2\big( 2p^{1/2} \norm{A}_{\mathrm{HS}} + 6p^{2/\alpha} \norm{A}_{\mathrm{op}} \big).
	\end{align*}
	
	To treat the diagonal terms, we use Corollary 6.1 in \cite{GSS19}, as $X_i^2$ are independent and satisfy $\norm{X_i^2}_{\Psi_{\alpha/2}} \le K^2$, so that it yields
	\[
	\IP\big( \abs{\sum_{i = 1}^n a_{ii} (X_i^2 - \IE X_i^2)} \ge t\big) \le 2 \exp\Big( - \frac{1}{C_\alpha K^2} \min\Big( \frac{t^2}{\sum_{i =1}^n a_{ii}^2}, \Big( \frac{t}{\max_{i = 1,\ldots,n} \abs{a_{ii}}}\Big)^{\alpha/2} \Big)\Big).
	\]
	Now it is clear that $\max_{i = 1,\ldots, n} \abs{a_{ii}} \le \norm{A}_{\mathrm{op}}$ and $\sum_{i = 1}^n a_{ii}^2 \le \norm{A}_{\mathrm{HS}}^2$. In particular,
	\[
	\norm{\sum_{i = 1}^n a_{ii} (X_i^2 - \IE X_i^2)}_{L^p} \le C_\alpha K^2(p^{1/2} \norm{A}_{\mathrm{HS}} + p^{2/\alpha} \norm{A}_{\mathrm{op}}).
	\]
	The claim \eqref{wast} now follows from Minkowski's inequality.
\end{proof}

Finally, we prove Proposition \ref{proposition:EuclideanNormVector}.

\begin{proof}[Proof of Proposition \ref{proposition:EuclideanNormVector}] It suffices to prove \eqref{ENVr} for matrices satisfying $\norm{B}_{\mathrm{HS}} = 1$, as otherwise we set $\tilde{B} = B \norm{B}_{\mathrm{HS}}^{-1}$ and use the equality
\[
    \{ \abs{\norm{BX}_2 - \norm{B}_{\mathrm{HS}}} \ge \norm{B}_{\mathrm{op}} t \} = \{ \abs{\norm{\tilde{B}X}_2 - 1} \ge \norm{\tilde{B}}_{\mathrm{op}} t \}.
\]

Now let us apply Theorem \ref{HWalpha} to the matrix $A \coloneqq B^T B$. An easy calculation shows that $\mathrm{trace}(A) = \mathrm{trace}(B^T B) = \norm{B}_{\mathrm{HS}}^2 = 1$, so that we have for any $t \ge 0$
\begin{align*}
    \IP\big( \abs{\norm{BX}_2 - 1} \ge t \big) &\le \IP\big( \abs{\norm{BX}_2^2 - 1} \ge \max(t, t^2) \big) \\
    &\le 2\exp\Big( - \frac{1}{C_\alpha} \min\Big( \frac{\max(t,t^2)^2}{K^4\norm{B}_{\mathrm{op}}^2}, \Big( \frac{\max(t,t^2)}{K^4 \norm{B}_{\mathrm{op}}^2} \Big)^{\alpha/2} \Big) \Big) \\
    &\le 2\exp\Big( - \frac{1}{C_\alpha} \min\Big( \frac{t^2}{K^4 \norm{B}_{\mathrm{op}}^2}, \Big( \frac{t^2}{K^4 \norm{B}_{\mathrm{op}}^2} \Big)^{\alpha/2} \Big) \Big) \\
    &\le 2\exp\Big( - \frac{1}{C_\alpha} \Big( \frac{t}{K^2 \norm{B}_{\mathrm{op}}} \Big)^{\alpha} \Big).
\end{align*}
Here, the first step follows from $\abs{z - 1} \le \min( \abs{z^2 -1}, \abs{z^2 - 1}^{1/2})$ for $z \ge 0$, in the second step we have used the estimates $\norm{A}_{\mathrm{HS}}^2 \le \norm{B}_{\mathrm{op}}^2 \norm{B}_{\mathrm{HS}}^2 = \norm{B}_{\mathrm{op}}^2$ and $\norm{A}_{\mathrm{op}} \le \norm{B}_{\mathrm{op}}^2$ and moreover the fact that since $\mathbb{E}X_i^2 = 1$, $K \ge C_\alpha > 0$ (cf.\ e.\,g.\ \cite[Lemma A.2]{GSS19}), while the last step follows from \eqref{from2toalpha} and \eqref{eqn:constantAdjustment}. Setting $t = K^2s\norm{B}_{\mathrm{op}}$ for $s \ge 0$ finishes the proof of \eqref{ENVr}. Finally, \eqref{nov} follows by taking $m=n$ and $B=I$.
\end{proof}

\section{Convex concentration for random variables with bounded Orlicz norms}\label{Section:convconc}

Assume $X_1, \ldots, X_n$ are independent random variables each taking values in some bounded interval $[a,b]$. Then, by convex concentration as established in \cite{Tal88, JS91, Led97}, for every convex $1$-Lipschitz function $f \colon [a,b]^n \to \mathbb{R}$,
\begin{equation}\label{convconc}
    \mathbb{P}(|f(X) - \mathbb{E}f(X)| > t) \le 2 \exp\Big(-\frac{t^2}{2(b-a)^2}\Big)
\end{equation}
for any $t \ge 0$ (see e.\,g.\ \cite[Corollary 3]{Sa00}).

While convex concentration for bounded random variables is by now standard, there is less literature for unbounded random variables. In \cite{Mar18}, a martingal-type approach is used, leading to a result for functionals with stochastically bounded increments. The special case of suprema of unbounded empirical processes was treated in \cite{Ad08,LV13,LV14}. Another branch of research, begun in \cite{Led97} and continued e.\,g.\ in \cite{Sa00,Sa03,GRS15,GRST17,GRSST18,AS19}, is based on functional inequalities (like Poincar\'{e} or log-Sobolev inequalities) restricted to convex functions and weak transport-entropy inequalities. In \cite[Lemma 1.8]{KZ18}, a generalization of \eqref{convconc} for subgaussian random variables ($\alpha =2$) was proven, which we may extend to any order $\alpha \in (0,2]$.

\begin{proposition}\label{prop-alpha}
	Let $X_1, \ldots, X_n$ be independent random variables, $\alpha \in (0,2]$ and $f \colon \mathbb{R}^n \to \mathbb{R}$ convex and $1$-Lipschitz. Then, for any $t \ge 0$,
	\[
	\mathbb{P}(|f(X) - \mathbb{E}f(X)| > t) \le 2 \exp\Big(-\frac{t^\alpha}{C_\alpha\lVert \max_i |X_i| \rVert_{\Psi_\alpha}^\alpha}\Big).
	\]
	In particular,
	\begin{equation}\label{orliczformulation}
	\lVert f(X) - \mathbb{E}f(X) \rVert_{\Psi_\alpha} \le C_\alpha \lVert \max_i |X_i| \rVert_{\Psi_\alpha}.
	\end{equation}
\end{proposition}

Note that the main results of the following two sections can be regarded as applications of Proposition \ref{prop-alpha}. If $f$ is separately convex only (i.\,e.\ convex is every coordinate with the other coordinates being fixed), it is still possible to prove a corresponding result for the upper tails. Indeed, it is no problem to modify the proof below accordingly, replacing \eqref{convconc} by \cite[Theorem 6.10]{BLM13}. Moreover, note that $\lVert \max_i |X_i| \rVert_{\Psi_\alpha}$ cannot be replaced by $\max_i \lVert |X_i| \rVert_{\Psi_\alpha}$ (a counterexample for $\alpha = 2$ is provided in \cite{KZ18}). In general, the Orlicz norm of $\max_i\abs{X_i}$ will be of order $(\log n)^{1/\alpha}$ (cf.\ Lemma \ref{maxOrl}).

\begin{proof}[Proof of Proposition \ref{prop-alpha}]
    Following the lines of the proof of \cite[Lemma 3.5]{KZ18}, the key step is a suitable truncation which goes back to \cite{Ad08}. Indeed, write
	\begin{equation}\label{trunc}
	X_i = X_i1_{\{|X_i|\le M\}} + X_i1_{\{|X_i| > M\}} \eqqcolon Y_i + Z_i
	\end{equation}
	with $M \coloneqq 8 \mathbb{E}\max_i |X_i|$ (in particular, $M \le C_\alpha \lVert \max_i |X_i| \rVert_{\Psi_\alpha}$, cf.\ \cite[Lemma A.2]{GSS19}),
	and let $Y = (Y_1, \ldots, Y_n)$, $Z = (Z_1, \ldots, Z_n)$. By the Lipschitz property of $f$,
	\begin{align}\label{dreieck}
	\begin{split}
	&\mathbb{P}(|f(X) - \mathbb{E}f(X)| > t)\\
	\le \ &\mathbb{P}(|f(Y) - \mathbb{E}f(Y)| + |f(X) - f(Y)| + |\mathbb{E}f(Y) - \mathbb{E}f(X)| > t)\\
	\le \ &\mathbb{P}(|f(Y) - \mathbb{E}f(Y)| + \lVert Z \rVert_2 + \mathbb{E}\lVert Z \rVert_2 > t),
	\end{split}
	\end{align}
	and hence it suffices to bound the terms in the last line.
	
	Applying \eqref{convconc} to $Y$ and using \eqref{from2toalpha} and \eqref{eqn:constantAdjustment}, we obtain
	\begin{align}\label{tailsA}
	\mathbb{P}(|f(Y) - \mathbb{E}f(Y)| > t) 
	\le 2 \exp\Big(- \frac{t^\alpha}{C_\alpha^\alpha\lVert \max_i |X_i| \rVert_{\Psi_\alpha}^\alpha}\Big).
	\end{align}
	Furthermore, below we will show that
	\begin{equation}\label{OrliczNormEst}
	\lVert \lVert Z \rVert_2 \rVert_{\Psi_\alpha} \le C_\alpha \lVert \max_i |X_i| \rVert_{\Psi_\alpha}.
	\end{equation}
	Hence, for any $t \ge 0$,
	\begin{equation}\label{tailsB}
	\mathbb{P}(\lVert Z \rVert_2 \ge t) \le 2 \exp \Big(- \frac{t^\alpha}{C_\alpha^\alpha\lVert \max_i |X_i| \rVert_{\Psi_\alpha}^\alpha}\Big),
	\end{equation}
	and by \cite[Lemma A.2]{GSS19},
	\begin{equation}\label{tailsC}
	\begin{split}
	\mathbb{E}\lVert Z \rVert_2 \le C_\alpha \lVert \max_i |X_i| \rVert_{\Psi_\alpha}.
	\end{split}
	\end{equation}
	
	Temporarily writing $K \coloneqq C_\alpha \lVert \max_i |X_i| \rVert_{\Psi_\alpha}$, where $C_\alpha$ is large enough so that \eqref{tailsA}, \eqref{tailsB} and \eqref{tailsC} hold, \eqref{dreieck} and \eqref{tailsC} yield
	\[
	\mathbb{P}(|f(X) - \mathbb{E}f(X)| > t)
	\le \mathbb{P}(|f(Y) - \mathbb{E}f(Y)| + \lVert Z \rVert_2 > t - K)
	\]
	if $t \ge K$.
	Using subadditivity and invoking \eqref{tailsA} and \eqref{tailsB}, we obtain
	\[
	\mathbb{P}(|f(X) - \mathbb{E}f(X)| > t) \le 4 \exp\Big(-\frac{(t-K)^\alpha}{(2K)^\alpha}\Big) \le 4 \exp\Big(-\frac{t^\alpha}{c_\alpha(2K)^\alpha}\Big),
	\]
	where the last step holds for $t \ge K+\delta$ for some $\delta > 0$. This bound extends trivially to any $t \ge 0$ (if necessary, by a suitable change of constants). Finally, the constant in front of the exponential may be adjusted to 2 by \eqref{eqn:constantAdjustment}, which finishes the proof.
	
	
	It remains to show \eqref{OrliczNormEst}. To this end, recall the Hoffmann--J{\o}rgensen inequality (cf.\ \cite[Theorem 6.8]{LT91}) in the following form: if $W_1, \ldots, W_n$ are independent random variables, $S_k := W_1 + \ldots + W_k$, and $t \ge 0$ is such that $\mathbb{P}(\max_k |S_k| > t) \le 1/8$, then
	\[
	\mathbb{E}\max_k|S_k| \le 3 \IE \max_i|W_i| + 8t.
	\]
	In our case, we set $W_i \coloneqq Z_i^2$, $t=0$, and note that by Chebyshev's inequality,
	\[
	\mathbb{P}(\max_iZ_i^2 > 0) = \mathbb{P}(\max_i|X_i| > M) \le \mathbb{E}\max_i|X_i|/M = 1/8,
	\]
	and consequently, recalling that $S_k = Z_1^2 + \ldots + Z_k^2$,
	\[
	\mathbb{P}(\max_k |S_k| > 0) \le \mathbb{P}(\max_i Z_i^2 > 0) \le 1/8.
	\]
	Thus, together with \cite[Lemma A.2]{GSS19}, we obtain
	\[
	\mathbb{E}\lVert Z \rVert_2^2 \le 3 \mathbb{E} \max_i Z_i^2 \le C_\alpha \lVert \max_i Z_i^2 \rVert_{\Psi_{\alpha/2}}.
	\]
	Now it is easy to see that $\lVert \max_i Z_i^2 \rVert_{\Psi_{\alpha/2}} \le \lVert \max_i |X_i| \rVert_{\Psi_\alpha}^2$, so that altogether we arrive at
	\begin{equation}\label{HJe}
	\mathbb{E}\lVert Z \rVert_2^2 \le C_\alpha \lVert \max_i |X_i| \rVert_{\Psi_\alpha}^2.
	\end{equation}
	
	Furthermore, by \cite[Theorem 6.21]{LT91}, if $W_1, \ldots, W_n$ are independent random variables with zero mean and $\alpha \in (0,1]$,
	\[
	\lVert \sum_{i=1}^{n} W_i \rVert_{\Psi_\alpha} \le C_\alpha (\lVert \sum_{i=1}^n W_i \rVert_{L^1} + \lVert \max_i |W_i| \rVert_{\Psi_\alpha}).
	\]
	In our case, we consider $W_i = Z_i^2 - \mathbb{E} Z_i^2$ and $\alpha/2$ (instead of $\alpha$). Together with the previous arguments (in particular \eqref{HJe}) and \cite[Lemma A.3]{GSS19}, this yields
	\begin{align*}
	\lVert \sum_{i=1}^n(Z_i^2 - \mathbb{E}Z_i^2) \rVert_{\Psi_{\alpha/2}} &\le C_\alpha (\mathbb{E}|\lVert Z\rVert_2^2 - \mathbb{E} \lVert Z\rVert_2^2| + \lVert \max_i |Z_i^2 - \mathbb{E} Z_i^2| \rVert_{\Psi_{\alpha/2}})\\
	&\le C_\alpha (\mathbb{E}\lVert Z\rVert_2^2 + \lVert \max_i Z_i^2 \rVert_{\Psi_{\alpha/2}})
	\le C_\alpha \lVert \max_i |X_i| \rVert_{\Psi_\alpha}^2.
	\end{align*}
	Combining this with \cite[Lemma A.3]{GSS19} and \eqref{HJe}, we arrive at \eqref{OrliczNormEst}.
\end{proof}

\section{Uniform tail bounds for first and second order chaos}\label{Section:unifHW}

In this section, we discuss bounds for the tails of the supremum of certain chaos-type classes of functions. Even if we are particularly interested in quadratic forms, i.\,e.\ uniform Hanson--Wright inequalities, let us first consider linear forms.

Let $X_1, \ldots, X_n$ be independent random variables, let $\alpha \in (0,2]$, and let $\{a_{i,t} \colon i = 1, \ldots, n, t \in \mathcal{T}\}$ be a compact set of real numbers, where $\mathcal{T}$ is some index set. Consider $g(X) \coloneqq \sup_{t \in \mathcal{T}} \sum_{i=1}^n a_{i,t}X_i$. Clearly, $g$ is convex and has Lipschitz constant $D \coloneqq \sup_{t \in \mathcal{T}} (\sum_{i=1}^n a_{i,t}^2)^{1/2}$. Therefore, applying Proposition \ref{prop-alpha}, we immediately obtain that for any $t \ge 0$,
\begin{equation}\label{SupLinForm}
\mathbb{P}(\abs{g(X)-\mathbb{E}g(X)} \ge t) \le 2 \exp\Big(-\frac{t^\alpha}{C_\alpha D^\alpha\lVert \max_i |X_i| \rVert_{\Psi_\alpha}^\alpha}\Big).
\end{equation}
For bounded random variables, corresponding tail bounds can be found e.\,g.\ in \cite[Eq.\ (14)]{Ma00b}, and choosing $\alpha=2$ we get back this result up to constants.

Our main aim is to derive a second order analogue of \eqref{SupLinForm}, i.\,e.\ a uniform Hanson--Wright inequality. A pioneering result in this direction (for Rademacher variables) can be found in \cite{Tal96a}. Later results include \cite{Ad15} (which requires the so-called concentration property), \cite{KMR14}, \cite{DE17} and \cite{GSS18b} (certain classes of weakly dependent random variables). In \cite{KZ18}, a uniform Hanson--Wright inequality for subgaussian random variables was proven. We may show a similar result for random variables with bounded Orlicz norms of any order $\alpha \in (0,2]$.

\begin{theorem}\label{unifHW}
Let $X_1, \ldots, X_n$ be independent, centered random variables and $K \coloneqq \norm{\max_i \abs{X_i}}_{\Psi_\alpha}$, where $\alpha \in (0,2]$. Let $\mathcal{A}$ be a compact set of real symmetric $n \times n$ matrices, and let $f(X) \coloneqq \sup_{A \in \mathcal{A}} (X^TAX - \mathbb{E}X^TAX)$. Then, for any $t \ge 0$,
\[
\mathbb{P}(f(X) - \mathbb{E}f(X) \ge t) \le 2\exp\Big(-\frac{1}{C_\alpha K^\alpha}\min\Big(\frac{t^\alpha}{(\mathbb{E}\sup_{A\in\mathcal{A}}\norm{AX}_2)^\alpha}, \frac{t^{\alpha/2}}{\sup_{A\in\mathcal{A}}\norm{A}_{\mathrm{op}}^{\alpha/2}}\Big)\Big).
\]
\end{theorem}

For $\alpha = 2$, this gives back \cite[Theorem 1.1]{KZ18} (up to constants and a different range of $t$). Comparing Theorem \ref{unifHW} to Theorem \ref{HWalpha}, we note that instead of a subgaussian term, we obtain an $\alpha$-subexponential term (which can be trivially transformed into a subgaussian term for $t \le \mathbb{E}\sup_{A\in\mathcal{A}}\norm{AX}_2$, but this does not cover the complete $\alpha$-subexponential regime). Moreover, Theorem \ref{unifHW} only gives a bound for the upper tails. Therefore, if $\mathcal{A}$ just consists of a single matrix, Theorem \ref{HWalpha} is stronger. These differences have technical reasons.

To prove Theorem \ref{unifHW}, we shall follow the basic steps of \cite{KZ18} and modify those where the truncation comes in. Let us first repeat some tools and results. In the sequel, for a random vector $W = (W_1, \ldots, W_n)$, we shall denote
\begin{equation}
    f(W) \coloneqq \sup_{A \in \mathcal{A}} (W^TAW - g(A)),
\end{equation}
where $g \colon \mathbb{R}^{n \times n} \to \mathbb{R}$ is some function. Moreover, if $A$ is any matrix, we denote by $\mathrm{Diag}(A)$ its diagonal part (regarded as a matrix with zero entries on its off-diagonal). The following lemma combines \cite[Lemmas 3.2 \& 3.5]{KZ18}.

\begin{lemma}\label{KZ3.1}
\begin{enumerate}
    \item Assume the vector $W$ has independent components which satisfy $W_i \le K$ a.s. Then, for any $t \ge 1$, we have
    \[
    f(W) - \mathbb{E}f(W) \le C\big(K(\mathbb{E}\sup_{A \in \mathcal{A}}\norm{AW}_2 + \mathbb{E}\sup_{A \in \mathcal{A}} \norm{\mathrm{Diag}(A)W}_2)\sqrt{t} + K^2\sup_{A\in \mathcal{A}}\norm{A}_{\mathrm{op}}t\big)
    \]
    with probability at least $1 - e^{-t}$.
    \item Assuming the vector $W$ has independent (but not necessarily bounded) components with mean zero, we have
    \[
    \mathbb{E}\sup_{A\in\mathcal{A}}\norm{\mathrm{Diag}(A)W}_2 \le C\mathbb{E}\sup_{A\in\mathcal{A}}\norm{AW}_2.
    \]
    \end{enumerate}
\end{lemma}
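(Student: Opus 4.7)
The plan is to treat the two parts separately. Part~(2) admits a short and clean argument; part~(1) is the real work and requires the usual Talagrand-type machinery for quadratic forms.

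For part~(1), I would split the quadratic form into its diagonal and off-diagonal contributions,
\[
W^TAW - \mathbb{E}W^TAW \;=\; \sum_{i\ne j}a_{ij}\bigl(W_iW_j-\mathbb{E}W_i\,\mathbb{E}W_j\bigr) \;+\; \sum_i a_{ii}\bigl(W_i^2-\mathbb{E}W_i^2\bigr),
\]
and handle each piece separately under $\sup_{A\in\mathcal{A}}$, combining them by a union bound at the end. For the off-diagonal part, standard de~la~Pe\~na--Montgomery-Smith decoupling reduces the problem (up to an absolute constant and an inessential recentering) to controlling $\sup_A(W^T A W'-\tilde g(A))$ with $W'$ an independent copy of $W$. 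Conditionally on $W'$, this is the supremum of linear functionals of the bounded vector $W\in[-K,K]^n$ with common Lipschitz constant $\sup_A\|AW'\|_2$, hence convex and $\sup_A\|AW'\|_2$-Lipschitz in $W$; the bounded convex concentration inequality~\eqref{convconc} then yields a conditional subgaussian tail. A second application of convex concentration---now to the $1$-Lipschitz convex map $W'\mapsto \sup_A\|AW'\|_2$---integrates $W'$ out and produces the Bernstein-type contribution with $\sqrt{t}$-factor proportional to $K\mathbb{E}\sup_A\|AW\|_2$ and linear-in-$t$ factor proportional to $K^2\sup_A\|A\|_{\mathrm{op}}$ (the latter stemming from the conditional variance of the bilinear form). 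For the diagonal piece, the summands $a_{ii}(W_i^2-\mathbb{E}W_i^2)$ are bounded in absolute value by $2K^2\sup_A\|A\|_{\mathrm{op}}$, and Bousquet's form of Talagrand's inequality for bounded empirical processes applies uniformly in $A$, with weak-variance factor controlled by $K\mathbb{E}\sup_A\|\mathrm{Diag}(A)W\|_2$. The main technical hurdle is to identify the right weak-variance proxy in each application and to cleanly absorb the absolute constants.

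For part~(2), the argument is a desymmetrization followed by Khintchine--Kahane. Let $\varepsilon=(\varepsilon_1,\ldots,\varepsilon_n)$ be an independent Rademacher vector and $W'$ an independent copy of $W$. Since every $W_i$ has mean zero, the standard vector-valued symmetrization argument---introduce $W'$, apply Jensen, and use that $W-W'$ equals $\varepsilon\odot(W-W')$ in distribution (with $\varepsilon\odot W=(\varepsilon_1W_1,\ldots,\varepsilon_nW_n)$)---yields
\[
\mathbb{E}\sup_{A\in\mathcal{A}}\|A(\varepsilon\odot W)\|_2 \;\le\; 2\,\mathbb{E}\sup_{A\in\mathcal{A}}\|AW\|_2.
\]
For fixed $W$, the vector-valued Khintchine--Kahane inequality applied to the Rademacher sum $A(\varepsilon\odot W)=\sum_j\varepsilon_j W_j\, a_{\cdot,j}$ (with $a_{\cdot,j}$ the $j$-th column of $A$) gives
\[
\mathbb{E}_\varepsilon\|A(\varepsilon\odot W)\|_2 \;\ge\; c\,\Bigl(\mathbb{E}_\varepsilon\|A(\varepsilon\odot W)\|_2^2\Bigr)^{1/2} \;=\; c\,\Bigl(\sum_j W_j^2\,\|a_{\cdot,j}\|_2^2\Bigr)^{1/2} \;\ge\; c\,\|\mathrm{Diag}(A)W\|_2,
\]
the last inequality using $\|a_{\cdot,j}\|_2^2\ge a_{jj}^2$ with $c>0$ the Kahane constant. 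Taking $\sup_{A\in\mathcal{A}}$ (via the trivial $\sup_A\mathbb{E}_\varepsilon(\cdot)\le\mathbb{E}_\varepsilon\sup_A(\cdot)$) and then integrating over $W$ yields the claim (with the absolute constant alluded to in the statement).
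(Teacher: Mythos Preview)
The paper does not actually prove this lemma: it is stated as a direct citation of \cite[Lemmas~3.1 \& 3.4]{KZ18}, with no argument given. Your proposal therefore goes well beyond what the paper does, and in fact reconstructs (in outline for part~(1), in full for part~(2)) essentially the proof strategy of \cite{KZ18}.

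Your sketch for part~(1)---split diagonal/off-diagonal, decouple the off-diagonal piece, apply bounded convex concentration conditionally on one copy, then integrate the second copy via a further convex concentration step, and handle the diagonal via Talagrand/Bousquet---is the right architecture and matches \cite{KZ18}. One small slip: the map $W'\mapsto\sup_{A}\lVert AW'\rVert_2$ is $\sup_{A}\lVert A\rVert_{\mathrm{op}}$-Lipschitz, not $1$-Lipschitz; this is precisely what generates the $K^2\sup_{A}\lVert A\rVert_{\mathrm{op}}\,t$ term you correctly identify, so the error is cosmetic. Your argument for part~(2) via desymmetrization (using that $\sup_{A,\lVert v\rVert=1}\langle A^Tv,\cdot\rangle$ is a norm, so Jensen applies) followed by Khintchine--Kahane and the trivial bound $\lVert a_{\cdot,j}\rVert_2\ge\lvert a_{jj}\rvert$ is clean and correct; this is also how \cite{KZ18} proceeds.
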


From now on, let $X$ be the random vector from Theorem \ref{unifHW}, and recall the truncated random vector $Y$ which we introduced in \eqref{trunc} (and the corresponding ``remainder'' $Z$). Then, Lemma \ref{KZ3.1} (1) for $f(Y)$ with $g(A) = \mathbb{E}X^TAX$ yields
\begin{equation}\label{3.11}
    f(Y) - \mathbb{E}f(Y) \le C\big(M(\mathbb{E}\sup_{A \in \mathcal{A}}\norm{AY}_2 + \mathbb{E}\sup_{A\in \mathcal{A}}\norm{\mathrm{Diag}(A)}_2)t^{1/\alpha} + M^2t^{2/\alpha}\sup_{A\in\mathcal{A}}\norm{A}_{\mathrm{op}}\big)
\end{equation}
with probability at least $1 - e^{-t}$ (actually, \eqref{3.11} even holds with $\alpha=2$, but in the sequel we will have to use the weaker version given above anyway). Here we recall that $M \le C_\alpha \lVert \max_i |X_i| \rVert_{\Psi_\alpha}$.

To prove Theorem \ref{unifHW}, it remains to replace the terms involving the truncated random vector $Y$ by the original vector $X$. First, by Proposition \ref{prop-alpha} and since $\sup_{A\in\mathcal{A}} \norm{AX}_2$ is $\sup_{A\in\mathcal{A}}\norm{A}_{\mathrm{op}}$-Lipschitz, we obtain
\begin{equation}\label{3.13}
    \mathbb{P}(\sup_{A\in\mathcal{A}} \norm{AX}_2 > \mathbb{E}\sup_{A\in\mathcal{A}}\norm{AX}_2 + C_\alpha\norm{\max_i \abs{X_i}}_{\Psi_\alpha} \sup_{A\in\mathcal{A}} \norm{A}_{\mathrm{op}}t^{1/\alpha}) \le 2e^{-t}.
\end{equation}
Moreover, by \eqref{tailsC},
\begin{equation}\label{3.14}
    \abs{\mathbb{E}\sup_{A\in\mathcal{A}} \norm{AY}_2 - \mathbb{E}\sup_{A\in\mathcal{A}} \norm{AX}_2} \le C_\alpha\norm{\max_i \abs{X_i}}_{\Psi_\alpha} \sup_{A\in\mathcal{A}} \norm{A}_{\mathrm{op}}.
\end{equation}

Next we estimate the difference between the expectations of $f(X)$ and $f(Y)$.

\begin{lemma}\label{KZ3.6}
    We have
    \[
    \abs{\mathbb{E}f(Y)-\mathbb{E}f(X)} \le C_\alpha\big(\norm{\max_i \abs{X_i}}_{\Psi_\alpha} \mathbb{E}\sup_{A\in\mathcal{A}}\norm{AX}_2 + \norm{\max_i \abs{X_i}}_{\Psi_\alpha}^2 \sup_{A \in \mathcal{A}}\norm{A}_\mathrm{op}\big).
    \]
\end{lemma}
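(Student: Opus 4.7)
My plan is to bound $|f(X)-f(Y)|$ pathwise and then take expectations, combining \eqref{HJe} (moments of $\|Z\|_2$) with the upper tail bound \eqref{3.13} (moments of $\sup_{A\in\mathcal{A}}\|AX\|_2$).

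\textbf{Pathwise bound.} Decompose $X=Y+Z$. Since each $A\in\mathcal{A}$ is symmetric,
\[
X^TAX-Y^TAY \;=\; 2Y^TAZ+Z^TAZ,
\]
so $|f(X)-f(Y)|\le 2\sup_{A\in\mathcal{A}}\|AY\|_2\|Z\|_2+\sup_{A\in\mathcal{A}}\|A\|_{\mathrm{op}}\|Z\|_2^2$. Bounding $\|AY\|_2\le\|AX\|_2+\|A\|_{\mathrm{op}}\|Z\|_2$ gives
\[
|f(X)-f(Y)|\;\le\;2\sup_{A\in\mathcal{A}}\|AX\|_2\,\|Z\|_2\;+\;3\sup_{A\in\mathcal{A}}\|A\|_{\mathrm{op}}\,\|Z\|_2^2.
\]

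\textbf{Second moment of $\sup_{A\in\mathcal{A}}\|AX\|_2$.} Write $K:=\|\max_i|X_i|\|_{\Psi_\alpha}$, $L:=\mathbb{E}\sup_{A\in\mathcal{A}}\|AX\|_2$, and $B:=\sup_{A\in\mathcal{A}}\|A\|_{\mathrm{op}}$. From \eqref{3.13} one has $\mathbb{P}(\sup_A\|AX\|_2>L+CKB\,u^{1/\alpha})\le 2e^{-u}$ for $u\ge 0$. Integrating $\mathbb{E}(\sup_A\|AX\|_2)^2=\int_0^\infty 2t\,\mathbb{P}(\sup_A\|AX\|_2>t)\,dt$, splitting at $t=2L$ and substituting $t=L+CKB\,u^{1/\alpha}$ on $[2L,\infty)$, gives $\mathbb{E}(\sup_A\|AX\|_2)^2\le C(L^2+K^2B^2)$, hence $(\mathbb{E}(\sup_A\|AX\|_2)^2)^{1/2}\le C(L+KB)$.

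\textbf{Assembly.} Take expectations of the pathwise inequality. By Cauchy--Schwarz, combined with \eqref{HJe} ($\mathbb{E}\|Z\|_2^2\le CK^2$) and the previous step,
\[
\mathbb{E}\bigl[\sup_{A\in\mathcal{A}}\|AX\|_2\,\|Z\|_2\bigr]\;\le\;\bigl(\mathbb{E}(\sup_A\|AX\|_2)^2\bigr)^{1/2}\bigl(\mathbb{E}\|Z\|_2^2\bigr)^{1/2}\;\le\;C(L+KB)\,K,
\]
while the second term contributes at most $3B\cdot CK^2$. Summing,
\[
\mathbb{E}|f(X)-f(Y)|\;\le\;C\bigl(K\,\mathbb{E}\sup_{A\in\mathcal{A}}\|AX\|_2\;+\;K^2\sup_{A\in\mathcal{A}}\|A\|_{\mathrm{op}}\bigr),
\]
and the claim follows from $|\mathbb{E}f(Y)-\mathbb{E}f(X)|\le\mathbb{E}|f(X)-f(Y)|$.

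The only delicate point is the second-moment estimate in Step 2: \eqref{3.13} only supplies an upper tail, not two-sided concentration, so the bound on $\mathbb{E}(\sup_A\|AX\|_2)^2$ has to be extracted by direct integration (as above) rather than by a standard variance argument. Everything else is pathwise algebra and Cauchy--Schwarz.
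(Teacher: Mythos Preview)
Your proof is correct and follows essentially the same route as the paper: a pathwise bound on $|f(X)-f(Y)|$ in terms of $\norm{Z}_2$ and $\sup_{A\in\mathcal{A}}\norm{AX}_2$, followed by Cauchy--Schwarz together with \eqref{HJe} and the second-moment bound extracted from \eqref{3.13}. The only cosmetic difference is that you eliminate $\sup_{A\in\mathcal{A}}\norm{AY}_2$ pathwise via $\norm{AY}_2\le\norm{AX}_2+\norm{A}_{\mathrm{op}}\norm{Z}_2$, whereas the paper keeps it and handles its second moment separately using \eqref{3.14}; your version is marginally more direct.
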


\begin{proof}
First note that
\begin{align*}
    f(X) &= \sup_{A\in\mathcal{A}} (Y^T AY - \mathbb{E}X^TAX + Z^TAX + Z^TAY)\\
    &\le \sup_{A\in\mathcal{A}} (Y^T AY - \mathbb{E}X^TAX) + \sup_{A\in\mathcal{A}}\abs{Z^TAX} + \sup_{A\in\mathcal{A}}\abs{Z^TAY}\\
    &\le f(Y) + \norm{Z}_2\sup_{A\in\mathcal{A}} \norm{AX}_2 + \norm{Z}_2\sup_{A\in\mathcal{A}}\norm{AY}_2.
\end{align*}
The same holds if we reverse the roles of $X$ and $Y$. As a consequence,
\begin{equation}\label{3.9}
    \abs{f(X)-f(Y)} \le \norm{Z}_2\sup_{A\in\mathcal{A}} \norm{AX}_2 + \norm{Z}_2\sup_{A\in\mathcal{A}}\norm{AY}_2
\end{equation}
and thus, taking expectations and applying H\"older's inequality,
\begin{equation}\label{3.15}
    \abs{\mathbb{E}f(X)-\mathbb{E}f(Y)} \le (\mathbb{E}\norm{Z}_2^2)^{1/2}((\mathbb{E}\sup_{A\in\mathcal{A}} \norm{AX}_2^2)^{1/2} + (\mathbb{E}\sup_{A\in\mathcal{A}} \norm{AY}_2^2)^{1/2}).
\end{equation}
We may estimate $(\mathbb{E}\norm{Z}_2^2)^{1/2}$ using \eqref{HJe}. Moreover, by related arguments as in \eqref{tailsC}, from \eqref{3.13} we get that
\[
\mathbb{E}\sup_{A\in\mathcal{A}} \norm{AX}_2^2 \le C_\alpha((\mathbb{E}\sup_{A\in\mathcal{A}} \norm{AX}_2)^2 + \norm{\max_i \abs{X_i}}_{\Psi_\alpha}^2 \sup_{A \in \mathcal{A}}\norm{A}_{\mathrm{op}}^2).
\]
Arguing similarly and using \eqref{3.14}, the same bound also holds for $(\mathbb{E}\sup_{A\in\mathcal{A}} \norm{AY}_2^2)^{1/2}$. Taking roots and plugging everything into \eqref{3.15} completes the proof.
\end{proof}

Finally, we prove the central result of this section.

\begin{proof}[Proof of Theorem \ref{unifHW}]
First, it immediately follows from Lemma \ref{KZ3.6} that
\begin{equation}\label{3.16}
    \mathbb{E}f(Y) \le \mathbb{E}f(X) + C_\alpha\big(\norm{\max_i \abs{X_i}}_{\Psi_\alpha} \mathbb{E}\sup_{A\in\mathcal{A}}\norm{AX}_2 + \norm{\max_i \abs{X_i}}_{\Psi_\alpha}^2 \sup_{A \in \mathcal{A}}\norm{A}_{\mathrm{op}}\big).
\end{equation}
Moreover, by \eqref{3.14} and Lemma \ref{KZ3.1} (2),
\begin{equation}\label{3.17}
    \mathbb{E}\sup_{A \in \mathcal{A}}\norm{AY}_2 + \mathbb{E}\sup_{A\in \mathcal{A}}\norm{\mathrm{Diag}(A)Y}_2 \le C_\alpha(\mathbb{E}\sup_{A \in \mathcal{A}}\norm{AX}_2 + \norm{\max_i \abs{X_i}}_{\Psi_\alpha} \sup_{A \in \mathcal{A}}\norm{A}_{\mathrm{op}}).
\end{equation}
Finally, it follows from \eqref{3.9}, \eqref{3.13} and \eqref{3.14} that
\begin{align*}
   \abs{f(X)-f(Y)} &\le \norm{Z}_2\sup_{A\in\mathcal{A}} \norm{AX}_2 + \norm{Z}_2\sup_{A\in\mathcal{A}}\norm{AY}_2\\
   &\le C_\alpha(\norm{Z}_2\mathbb{E}\sup_{A\in\mathcal{A}} \norm{AX}_2 + \norm{Z}_2\norm{\max_i \abs{X_i}}_{\Psi_\alpha}\sup_{A\in\mathcal{A}}\norm{A}_{\mathrm{op}}t^{1/\alpha})
\end{align*}
with probability at least $1 - 4e^{-t}$ for all $t \ge 1$. Using \eqref{tailsB}, it follows that
\begin{equation}\label{3.17b}
   \abs{f(X)-f(Y)} \le C_\alpha(\norm{\max_i \abs{X_i}}_{\Psi_\alpha}\mathbb{E}\sup_{A\in\mathcal{A}} \norm{AX}_2t^{1/\alpha} + \norm{\max_i \abs{X_i}}_{\Psi_\alpha}^2\sup_{A\in\mathcal{A}}\norm{A}_{\mathrm{op}}t^{2/\alpha})
\end{equation}
with probability at least $1 - 6e^{-t}$ for all $t \ge 1$. Combining \eqref{3.16}, \eqref{3.17} and \eqref{3.17b} and plugging into \eqref{3.11} thus yields that with probability at least $1 - 6e^{-t}$ for all $t \ge 1$,
\begin{align*}
f(X) - \mathbb{E}f(X) &\le C_\alpha(\norm{\max_i \abs{X_i}}_{\Psi_\alpha}\mathbb{E}\sup_{A\in\mathcal{A}} \norm{AX}_2t^{1/\alpha} + \norm{\max_i \abs{X_i}}_{\Psi_\alpha}^2\sup_{A\in\mathcal{A}}\norm{A}_{\mathrm{op}}t^{2/\alpha})\\
&\eqqcolon C_\alpha(at^{1/\alpha} + bt^{2/\alpha}).
\end{align*}
If $u \ge \max(a,b)$, it follows that
\[
\mathbb{P}(f(X) - \mathbb{E}f(X) \ge u) \le 6\exp\Big(-\frac{1}{C_\alpha}\min\Big(\Big(\frac{u}{a}\Big)^\alpha, \Big(\frac{u}{b}\Big)^{\alpha/2}\Big)\Big).
\]
By standard means (a suitable change of constants, using \eqref{eqn:constantAdjustment}), this bound may be extended to any $u \ge 0$ and the constant may be adjusted to $2$.
\end{proof}

\section{Random Tensors}\label{Section:AuxL}

By a \emph{simple random tensor}, we mean a random tensor of the form
\begin{equation}\label{SRT}
X \coloneqq X_1 \otimes \cdots \otimes X_d = (X_{1,i_1} \cdots X_{d,i_d})_{i_1, \ldots, i_d} \in \mathbb{R}^{n^d},
\end{equation}
where all $X_k$ are independent random vectors in $\mathbb{R}^n$ whose coordinates are independent, centered random variables with variance one. Concentration results for random tensors (typically for polynomial-type functions) have been shown in \cite{La06, AW15, GSS19}, for instance.

Recently, in \cite{Ver19} new and interesting concentration bounds for simple random tensors were shown. In comparison to previous work, these inequalities focus on \emph{small} values of $t$, e.\,g.\ a regime where subgaussian tail decay holds. Moreover, in contrast to previous papers, \cite{Ver19} provides constants with optimal dependence on $d$. One of these results is the following convex concentration inequality: assuming that $n$ and $d$ are positive integers, $f \colon \mathbb{R}^{n^d} \to \mathbb{R}$ is convex and 1-Lipschitz and the $X_{ij}$ are bounded a.s., then for any $t \in [0, 2 n^{d/2}]$,
\begin{equation}\label{V1.3}
\mathbb{P}(|f(X) - \mathbb{E}f(X)| > t) \le 2 \exp \Big(- \frac{t^2}{Cdn^{d-1}}\Big),
\end{equation}
where $C > 0$ only depends on the bound of the coordinates. Using Theorem \ref{HWalpha} and Proposition \ref{prop-alpha}, we may extend this result to unbounded random variables as follows:

\begin{theorem}\label{V1.3subg}
	Let $n, d \in \mathbb{N}$ and $f \colon \mathbb{R}^{n^d} \to \mathbb{R}$ be convex and 1-Lipschitz. Consider a simple random tensor $X \coloneqq X_1 \otimes \cdots \otimes X_d$ as in \eqref{SRT}. Fix $\alpha \in [1,2]$, and assume that $\lVert X_{i,j} \rVert_{\Psi_\alpha} \le K$. Then, for any $t \in [0, c_\alpha n^{d/2} (\log n)^{1/\alpha}/K]$,
	\[
    \mathbb{P}(|f(X) - \mathbb{E}f(X)| > t) \le 2 \exp\Big(-\frac{1}{C_\alpha}\Big( \frac{t}{d^{1/2} n^{(d-1)/2} (\log n)^{1/\alpha} K}\Big)^\alpha\Big).
    \]
	On the other hand, if $\alpha \in (0,1)$, then, for any $t \in [0, c_\alpha n^{d/2} (\log n)^{1/\alpha}d^{1/\alpha-1/2}/K]$,
	\[
	\mathbb{P}(|f(X) - \mathbb{E}f(X)| > t) \le 2 \exp\Big(-\frac{1}{C_\alpha}\Big( \frac{t}{d^{1/\alpha}n^{(d-1)/2} (\log n)^{1/\alpha} K}\Big)^\alpha\Big).
	\]
\end{theorem}

The logarithmic factor stems from the Orlicz norm of $\max_i |X_i|$ in Proposition \ref{prop-alpha}. For a slightly sharper version which includes the explicit dependence on these norms (and also gives back \eqref{V1.3} for bounded random variables and $\alpha = 2$), see \eqref{V1.3subgsp} in the proof of Theorem \ref{V1.3subg}. We believe that Theorem \ref{V1.3subg} is non-optimal for $\alpha<1$ as we would expect a bound of the same type as for $\alpha \in [1,2]$. However, a key difference in the proofs is that in the case of $\alpha \ge 1$ we can make use of moment-generating functions. This is clearly not possible if $\alpha < 1$, so that less subtle estimates must be invoked instead.

For the proof of Theorem \ref{V1.3subg}, we first adapt some preliminary steps and compile a number of auxiliary lemmas whose proofs are deferred to the appendix. As a start, we need some additional characterizations of $\alpha$-subexponential random variables via the behavior of the moment-generating functions:

\begin{proposition}\label{proptails}
Let $X$ be a random variable and $\alpha \in (0,2]$. Then, the properties \eqref{SubExpT}, \eqref{L^pChar} and \eqref{ON} are equivalent to
\begin{equation}\label{mgfhochalpha}
    \mathbb{E}\exp(\lambda^\alpha |X|^\alpha) \le \exp(C_{4,\alpha}^\alpha \lambda^\alpha)
\end{equation}
for all $0 \le \lambda \le 1/C_{4,\alpha}$. If $\alpha \in [1,2]$ and $\mathbb{E}X = 0$, then the above properties are moreover equivalent to
\begin{equation}\label{mgf}
    \mathbb{E}\exp(\lambda X) \le \begin{cases}
    \exp(C_{5,\alpha}^2 \lambda^2) & \text{if} \ |\lambda| \le 1/C_{5,\alpha} \\ \exp(C_{5,\alpha}^{\alpha/(\alpha-1)} |\lambda|^{\alpha/(\alpha-1)}) & \text{if} \ |\lambda| \ge 1/C_{5,\alpha} \ \text{and} \ \alpha > 1.
    \end{cases}
\end{equation}
The parameters $C_{i,\alpha}$, $i= 1, \ldots, 5$, can be chosen such that they only differ by constant $\alpha$-dependent factors. In particular, we can take $C_{i,\alpha} = c_{i,\alpha}\lVert X \rVert_{\Psi_\alpha}$.
\end{proposition}


To continue, note that $\lVert X \rVert_2 = \prod_{i=1}^d \lVert X_i \rVert_2$. A key step in the proofs of \cite{Ver19} is a maximal inequality which simultaneously controls the tails of $\prod_{i=1}^k \lVert X_i \rVert_2$, $k = 1, \ldots, d$, where the $X_i$ have independent subgaussian components, i.\,e.\ $\alpha=2$. Generalizing these results to any order $\alpha \in (0,2]$ is not hard. The following preparatory lemma extends \cite[Lemma 3.1]{Ver19}. Note that in the proof (given in the appendix again), we apply Proposition \ref{proposition:EuclideanNormVector}.

\begin{lemma}\label{V3.1}
	Let $X_1, \ldots, X_d \in \mathbb{R}^n$ be independent random vectors with independent, centered coordinates such that $\IE X_{i,j}^2 = 1$ and $\lVert X_{i,j} \rVert_{\Psi_\alpha} \le K$ for some $\alpha \in (0,2]$. Then, for any $t \in [0, 2n^{d/2}]$,
	\[
	\mathbb{P} \Big(\prod_{i=1}^d \lVert X_i \rVert_2 > n^{d/2} + t\Big) \le 2 \exp\Big(-\frac{1}{C_\alpha} \Big(\frac{t}{K^2 d^{1/2}n^{(d-1)/2}}\Big)^\alpha\Big).
	\]
\end{lemma}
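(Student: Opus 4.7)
My plan is to induct on $d$. The base case $d=1$ is exactly \eqref{nov} of Proposition~\ref{proposition:EuclideanNormVector}, since $d^{1/2}n^{(d-1)/2}=1$ in that case.

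For the inductive step, set $P_{d-1}\coloneqq\prod_{i=1}^{d-1}\lVert X_i\rVert_2$ and $R\coloneqq\lVert X_d\rVert_2$, and use the identity
\[
P_{d-1}R - n^{d/2} = \sqrt n\,\bigl(P_{d-1}-n^{(d-1)/2}\bigr) + P_{d-1}\,\bigl(R-\sqrt n\bigr).
\]
If the left-hand side exceeds $t$, then one of the two summands exceeds $t/2$. The first is controlled by the inductive hypothesis with parameter $s = t/(2\sqrt n)\in[0,n^{(d-1)/2}]$ (using $t\le 2n^{d/2}$). For the second I condition on the event $E\coloneqq\{P_{d-1}\le 2n^{(d-1)/2}\}$: on $E$ the summand is dominated by $2n^{(d-1)/2}(R-\sqrt n)$, whose $\alpha$-subexponential tail follows from Proposition~\ref{proposition:EuclideanNormVector} applied to $X_d$; off $E$, the inductive hypothesis with $s=n^{(d-1)/2}$, combined with $\sqrt{d/(d-1)}\ge 1$ and $t\le 2n^{d/2}$, bounds $\mathbb{P}(E^c)$ within the target tail. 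A three-way union bound, the trivial comparisons $(d-1)^{1/2}\le d^{1/2}$ and $1\le d^{1/2}$, and the constant-adjustment identity~\eqref{eqn:constantAdjustment} then combine these estimates into the stated inequality.

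The delicate step is to preserve a uniform $\alpha$-dependent constant $c$ through the induction: as written, each application of the scheme shrinks $c$ by a fixed factor (coming from the halvings $t/2$ and the union bound), and $d$-fold compounding is not admissible. One way to fix this is to prove instead a sharper mixed subgaussian/$\alpha$-subexponential bound—the $\sqrt d$ in the scale then arising naturally from a Bernstein-type estimate for $\sum_{i}(\lVert X_i\rVert_2-\mathbb{E}\lVert X_i\rVert_2)$—and to extract the lemma as the $\alpha$-subexponential branch. Alternatively, one may avoid induction altogether by expanding
\[
\prod_{i=1}^d \lVert X_i\rVert_2 - \prod_{i=1}^d \mathbb{E}\lVert X_i\rVert_2 = \sum_{\varnothing\ne S\subseteq[d]}\,\prod_{i\notin S}\mathbb{E}\lVert X_i\rVert_2\,\cdot\,\prod_{i\in S}\bigl(\lVert X_i\rVert_2-\mathbb{E}\lVert X_i\rVert_2\bigr),
\]
bounding the $L^p$-norm of each order-$|S|$ homogeneous chaos via independence and $L^2$-orthogonality of the centered factors together with Proposition~\ref{proposition:EuclideanNormVector}, and converting moments to tails via $\lVert\cdot\rVert_{\Psi_\alpha}\asymp\sup_{p\ge 1}\lVert\cdot\rVert_{L^p}/p^{1/\alpha}$.
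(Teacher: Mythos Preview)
Your inductive scheme, as you yourself diagnose, cannot deliver a $d$-independent constant: each step costs a fixed factor in $c$, and no amount of bookkeeping inside the induction recovers it. So the main body of the proposal is not a proof of the lemma.

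Your first suggested repair---a Bernstein-type estimate for $\sum_{i=1}^d(\lVert X_i\rVert_2-\mathbb{E}\lVert X_i\rVert_2)$---is exactly the paper's route, but you are missing the one step that makes it work without induction: the conversion of the \emph{product} deviation into a \emph{sum} deviation. The paper does this via the AM--GM inequality,
\[
\Big\{\prod_{i=1}^d \lVert X_i\rVert_2 > (\sqrt n + s)^d\Big\}\ \subseteq\ \Big\{\tfrac{1}{d}\sum_{i=1}^d\bigl(\lVert X_i\rVert_2-\sqrt n\bigr) > s\Big\}\ \subseteq\ \Big\{\tfrac{1}{d}\sum_{i=1}^d\bigl(\lVert X_i\rVert_2-\mathbb{E}\lVert X_i\rVert_2\bigr) > s\Big\},
\]
using $\mathbb{E}\lVert X_i\rVert_2\le\sqrt n$. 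Then \eqref{nov} gives $\lVert\,\lVert X_i\rVert_2-\mathbb{E}\lVert X_i\rVert_2\,\rVert_{\Psi_\alpha}\le CK^2$, and a one-shot Bernstein/large-deviation bound for i.i.d.\ centered $\alpha$-subexponential summands yields the tail $2\exp(-c(s\sqrt d/K^2)^\alpha)$; the $\sqrt d$ is the CLT normalization, not an artifact of iteration. Finally one substitutes $s=u\sqrt n/(2d)$ and uses $(1+u/(2d))^d\le 1+u$ for $u\in[0,2]$, then sets $u=t/n^{d/2}$. No induction, no constant erosion.

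Your second repair (the multilinear expansion of $\prod_i\lVert X_i\rVert_2-\prod_i\mathbb{E}\lVert X_i\rVert_2$) can be made to work, and the $L^2$-orthogonality of the chaos components does produce the factor $d^{1/2}$ at order one, but pushing it to general $p$ requires controlling $\sum_{k\ge 1}\binom{d}{k}n^{(d-k)/2}(CK^2p^{1/\alpha})^k$ uniformly in $d$ on the range $t\le 2n^{d/2}$, which is noticeably heavier than the AM--GM argument. If you want a self-contained alternative, that is the one to develop; but the paper's approach is both shorter and sharper.
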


To control all $k=1, \ldots, d$ simultaneously, we need a generalized version of the maximal inequality \cite[Lemma 3.2]{Ver19} which we state next.

\begin{lemma}\label{V3.2g}
	Let $X_1, \ldots, X_d \in \mathbb{R}^n$ be independent random vectors with independent, centered coordinates such that $\IE X_{i,j}^2 = 1$ and $\lVert X_{i,j} \rVert_{\Psi_\alpha} \le K$ for some $\alpha \in (0,2]$. Then, for any $u \in [0,2]$,
	\[
	\mathbb{P}\Big(\max_{1 \le k \le d}n^{-k/2}\prod_{i=1}^k \lVert X_i \rVert_2 > 1 + u \Big) \le 2 \exp\Big(-\frac{1}{C_\alpha}\Big(\frac{n^{1/2} u}{K^2d^{1/2}}\Big)^\alpha\Big).
	\]
\end{lemma}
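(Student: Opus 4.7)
\emph{Plan.} The idea is to linearize: reduce the product $\prod_{i=1}^k \norm{X_i}_2/\sqrt{n}$ to a sum via $1+z \le e^z$, then apply an Ottaviani-type maximal inequality combined with the $\alpha$-subexponential tail bound from Proposition \ref{proposition:EuclideanNormVector}. Set $M_k := n^{-k/2}\prod_{i=1}^k \norm{X_i}_2$ and $Z_i := \norm{X_i}_2/\sqrt{n} - 1 \ge -1$, so that $M_k = \prod_{i=1}^k(1+Z_i) \le \exp(S_k)$ with $S_k := \sum_{i=1}^k Z_i$ (all factors are nonnegative and $1+z\le e^z$). Since $\IE\norm{X_i}_2 \le \sqrt{n}$ by Jensen's inequality, $\IE Z_i \le 0$, so $S_k \le \bar S_k := \sum_{i=1}^k(Z_i - \IE Z_i)$. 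Combined with $\log(1+u) \ge u/3$ on $[0,2]$, this yields the inclusion
\[
\{\max_{1\le k\le d} M_k > 1+u\} \subseteq \{\max_{1\le k \le d}\bar S_k > u/3\},
\]
reducing the problem to a maximal inequality for a sum of independent centered random variables.

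\emph{Tails of $\bar S_k$.} Proposition \ref{proposition:EuclideanNormVector} with $B = I_n$ gives $\norm{\norm{X_i}_2 - \sqrt{n}}_{\Psi_\alpha} \le CK^2$, and the Orlicz triangle-type inequality then yields $\norm{\bar Z_i}_{\Psi_\alpha} \le CK^2/\sqrt{n}$. Since the $\bar Z_i$ are independent and centered, repeating the concentration estimate already used in the proof of Lemma \ref{V3.1} (combining \cite{GK95} for $\alpha \ge 1$, \cite{HMO97} for $\alpha \le 1$, and \eqref{from2toalpha}) gives, for every $t \ge 0$ and every $1 \le k \le d$,
\[
\IP(\bar S_d \ge t) \le 2\exp\Big(-c\Big(\frac{t\sqrt{n}}{K^2\sqrt{d}}\Big)^{\alpha}\Big), \quad \IP(\abs{\bar S_d - \bar S_k} \ge t) \le 2\exp\Big(-c\Big(\frac{t\sqrt{n}}{K^2\sqrt{d}}\Big)^{\alpha}\Big).
\]

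\emph{Ottaviani's inequality and constant adjustment.} Ottaviani's inequality for independent summands reads
\[
\IP(\max_{1\le k\le d}\bar S_k \ge 2v) \le \frac{\IP(\bar S_d \ge v)}{\min_{1\le k\le d}\IP(\abs{\bar S_d - \bar S_k} \le v)}.
\]
By the tail bound above, the denominator is at least $1/2$ provided $v \ge c_0 K^2\sqrt{d}/\sqrt{n}$ for a suitable constant $c_0$; specialising to $v := u/6$ and inserting the inclusion from the first paragraph gives, whenever $u\sqrt{n}/(K^2\sqrt{d})$ exceeds an absolute constant,
\[
\IP(\max_{1\le k\le d} M_k > 1+u) \le 4\exp\Big(-c\Big(\frac{u\sqrt{n}}{K^2\sqrt{d}}\Big)^{\alpha}\Big).
\]
In the complementary range $u\sqrt{n}/(K^2\sqrt{d}) = O(1)$ the right-hand side of the target inequality is bounded below by an absolute constant, so the bound holds trivially there; a final application of \eqref{eqn:constantAdjustment} reduces the prefactor $4$ to $2$. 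The main obstacle is the passage from single-$k$ tails to the maximum, which Ottaviani handles cleanly: the threshold $v \asymp K^2\sqrt{d}/\sqrt{n}$ for its hypothesis coincides with the scale at which the target estimate first becomes non-trivial, so the small-$u$ case collapses into the constant adjustment rather than requiring separate work.
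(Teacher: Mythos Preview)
Your proof is correct and takes a genuinely different route from the paper's. The paper follows Vershynin's dyadic scheme: it partitions $\{1,\dots,d\}$ into binary intervals, applies Lemma~\ref{V3.1} on each interval with level-dependent thresholds $2^{-l/4}u$, and then takes a union bound over all levels and intervals, using the exponential growth $2^{l\alpha/4}$ in the exponent to make the sum converge. Your argument instead linearizes the product via $1+z\le e^{z}$, reducing $\max_k M_k > 1+u$ to a one-sided maximal inequality for the partial sums $\bar S_k$ of independent centered variables, and then invokes Ottaviani's inequality together with the single-sum concentration bound already used in Lemma~\ref{V3.1}.

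Your approach is more elementary and avoids the combinatorics of the binary decomposition entirely; the only subtlety is ensuring Ottaviani's denominator is bounded away from zero, and you correctly observe that the threshold $v \asymp K^2\sqrt{d}/\sqrt{n}$ needed for this coincides with the scale below which the target bound is vacuous. The paper's method has the advantage of staying closer to the structure of \cite{Ver19} (useful if one wants to track the argument back to the subgaussian case), and it never needs the crude $1+z\le e^z$ bound, working directly with the product. But both yield the same conclusion up to constants, and your linearize-then-Ottaviani route is arguably cleaner.
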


The following martingale-type bound is directly taken from \cite{Ver19}:

\begin{lemma}[\cite{Ver19}, Lemma 4.1]\label{V4.1}
	Let $X_1, \ldots X_d$ be independent random vectors. For each $k = 1, \ldots, d$, let $f_k = f_k(X_k, \ldots, X_d)$ be an integrable real-valued function and $\mathcal{E}_k$ be an event that is uniquely determined by the vectors $X_k, \ldots, X_d$. Let $\mathcal{E}_{d+1}$ be the entire probability space. Suppose that for every $k = 1, \ldots, d$ we have
	\[
	\mathbb{E}_{X_k} \exp(f_k) \le \pi_k
	\]
	for every realisation of $X_{k+1}, \ldots, X_d$ in $\mathcal{E}_{k+1}$. Then, for $\mathcal{E} \coloneqq \mathcal{E}_2 \cap \cdots \cap \mathcal{E}_d$, we have
	\[
	\mathbb{E} \exp(f_1 + \ldots + f_d)1_\mathcal{E} \le \pi_1 \cdots \pi_d.
	\]
\end{lemma}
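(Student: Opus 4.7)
The plan is to peel off the variables $X_1, X_2, \ldots, X_d$ one at a time by conditioning on the remaining variables and integrating out the current one, using independence of the $X_k$'s and the hypothesis $\mathbb{E}_{X_k}\exp(f_k) \le \pi_k$. Concretely, I will prove by induction on $k = 0, 1, \ldots, d$ the statement
\begin{equation*}
\mathbb{E}\bigl[\exp(f_1 + \cdots + f_d)\mathbf{1}_\mathcal{E}\bigr] \;\le\; \pi_1 \cdots \pi_k \cdot \mathbb{E}\bigl[\exp(f_{k+1} + \cdots + f_d)\mathbf{1}_{\mathcal{G}_k}\bigr],
\end{equation*}
where $\mathcal{G}_k$ is an intersection of some of the events $\mathcal{E}_j$ with $j \ge k+1$, chosen so that $\mathcal{G}_k$ depends only on $X_{k+1}, \ldots, X_d$ and satisfies $\mathcal{G}_k \subset \mathcal{E}_{k+1}$. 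Initially I take $\mathcal{G}_0 = \mathcal{E}$, which depends only on $X_2, \ldots, X_d$ (note that every $\mathcal{E}_j$ with $j \ge 2$ is determined by $X_{j+1}, \ldots, X_d$, so $\mathcal{E}$ does not depend on $X_1$). The final step $k=d$ gives the claim since $\mathbb{E}[\mathbf{1}_{\mathcal{G}_d}] \le 1$.

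For the inductive step at index $k+1$, I condition on $X_{k+2}, \ldots, X_d$ and integrate out $X_{k+1}$. By the construction of $\mathcal{G}_k$, both $\exp(f_{k+2} + \cdots + f_d)$ and $\mathbf{1}_{\mathcal{G}_k}$ depend only on $X_{k+1}, \ldots, X_d$, and $f_{k+1}$ depends only on $X_{k+1}, \ldots, X_d$, so Fubini and the independence of $X_{k+1}$ from $(X_{k+2}, \ldots, X_d)$ give
\begin{equation*}
\mathbb{E}\bigl[\exp(f_{k+1} + \cdots + f_d)\mathbf{1}_{\mathcal{G}_k}\bigr] = \mathbb{E}\Bigl[\exp(f_{k+2} + \cdots + f_d)\,\mathbf{1}_{\mathcal{G}_k}\,\mathbb{E}_{X_{k+1}}\exp(f_{k+1})\Bigr].
\end{equation*}
Since $\mathcal{G}_k \subset \mathcal{E}_{k+2}$ (I will arrange the bookkeeping so this inclusion holds at every step), on the support of $\mathbf{1}_{\mathcal{G}_k}$ the inner expectation is at most $\pi_{k+1}$ by hypothesis. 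Pulling this bound out and then upper bounding $\mathbf{1}_{\mathcal{G}_k}$ by $\mathbf{1}_{\mathcal{G}_{k+1}}$, where $\mathcal{G}_{k+1}$ is obtained from $\mathcal{G}_k$ by discarding any factor $\mathbf{1}_{\mathcal{E}_j}$ which still depends on $X_{k+2}$, completes the step.

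The only real bookkeeping issue --- which I view as the main obstacle --- is making sure the updated indicator $\mathbf{1}_{\mathcal{G}_{k+1}}$ depends only on $X_{k+2}, \ldots, X_d$ (so that the Fubini step works at the next iteration) while still satisfying $\mathcal{G}_{k+1} \subset \mathcal{E}_{k+2}$ (so that the conditional bound is applicable). This is straightforward once one notes the staggered nature of the dependencies: $\mathcal{E}_j$ depends on $X_{j+1}, \ldots, X_d$, so after integrating out $X_{k+1}$, any remaining $\mathbf{1}_{\mathcal{E}_j}$ with $j \ge k+2$ depends only on $X_{k+2}, \ldots, X_d$, while the factor $\mathbf{1}_{\mathcal{E}_{k+1}}$ (which depended on $X_{k+2}, \ldots, X_d$ and was needed for the bound at step $k+1$) can simply be discarded by the trivial inequality $\mathbf{1}_{\mathcal{E}_{k+1}} \le 1$. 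In particular the inclusion $\mathcal{G}_{k+1} \subset \mathcal{E}_{k+2}$ is preserved throughout, and no additional estimates beyond the hypothesis are required.
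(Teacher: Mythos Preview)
The paper does not prove this lemma; it is quoted verbatim from \cite{Ver19} without proof. Your peeling argument is the standard one and is correct in substance. One small imprecision: in the Fubini step you need $\exp(f_{k+2}+\cdots+f_d)\,\mathbf{1}_{\mathcal{G}_k}$ to be \emph{independent of} $X_{k+1}$, i.e.\ measurable with respect to $X_{k+2},\ldots,X_d$, not merely with respect to $X_{k+1},\ldots,X_d$ as you wrote. This does hold, since every $\mathcal{E}_j$ with $j\ge k+1$ is determined by $X_{j+1},\ldots,X_d\subset\{X_{k+2},\ldots,X_d\}$ and each $f_j$ with $j\ge k+2$ depends only on $X_j,\ldots,X_d$; so the equality is valid and the rest of the induction goes through as you describe.
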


Finally, we need a bound for the Orlicz norm of $\max_i |X_i|$.

\begin{lemma}\label{maxOrl}
    Let $X_1, \ldots, X_n$ be independent, centered random variables such that $\lVert X_i \rVert_{\Psi_\alpha} \le K$ for any $i$ and some $\alpha > 0$. Then,
    \[
    \lVert \max_i |X_i| \rVert_{\Psi_\alpha} \le C_\alpha K \max\Big\{\Big(\frac{\sqrt{2}+1}{\sqrt{2}-1}\Big)^{1/\alpha},(\log n)^{1/\alpha}\Big(\frac{2}{\log 2}\Big)^{1/\alpha}\Big\}.
    \]
    Here, we may choose $C_\alpha= \max\{2^{1/\alpha-1}, 2^{1-1/\alpha}\}$.
\end{lemma}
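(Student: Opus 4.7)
The plan is to verify the inequality by directly bounding the expectation that defines the Orlicz (quasi-)norm: if $t$ equals the right-hand side of the claim, we want $\mathbb{E}\exp((\max_i|X_i|/t)^\alpha) \le 2$, which then gives $\lVert\max_i|X_i|\rVert_{\Psi_\alpha} \le t$ by \eqref{ON}. The two inputs are, first, the tail bound $\mathbb{P}(|X_i| \ge s) \le 2\exp(-(s/K)^\alpha)$, which follows from $\lVert X_i\rVert_{\Psi_\alpha} \le K$ by Markov's inequality applied to $\exp((|X_i|/K)^\alpha)$, and second, a union bound, together yielding
\[
\mathbb{P}(\max_i |X_i| \ge s) \le \min\bigl(1, 2n\exp(-(s/K)^\alpha)\bigr).
\]

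With $M \coloneqq \max_i|X_i|$ and $\beta \coloneqq (t/K)^\alpha$, the layer-cake formula (after the change of variables $u = \exp((s/t)^\alpha)$) turns the Orlicz integrand into
\[
\mathbb{E}\exp((M/t)^\alpha) = 1 + \int_1^\infty \mathbb{P}(M > t(\log u)^{1/\alpha})\,du \le 1 + \int_1^\infty \min(1, 2n u^{-\beta})\,du.
\]
For $\beta > 1$ I would split the remaining integral at the threshold $u_\ast = (2n)^{1/\beta}$ where the two arguments of the minimum coincide, and evaluate each piece in closed form; the result is the clean estimate
\[
\mathbb{E}\exp((M/t)^\alpha) \le (2n)^{1/\beta}\cdot\frac{\beta}{\beta-1}.
\]

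It then remains to choose $\beta$ so that this product does not exceed $2$. The natural tactic is to pick two constants $a,b > 1$ with $ab \le 2$ and to demand $(2n)^{1/\beta} \le a$ and $\beta/(\beta-1) \le b$: the first condition requires $\beta$ of order $\log n$, which after taking the $\alpha$-th root yields the $(\log n)^{1/\alpha}(2/\log 2)^{1/\alpha}$ entry of the maximum, while the second condition requires $\beta$ above a universal constant, which produces the $((\sqrt{2}+1)/(\sqrt{2}-1))^{1/\alpha}$ entry. The outer prefactor $C = \max\{2^{1/\alpha-1}, 2^{1-1/\alpha}\} = 2^{|1-1/\alpha|}$ records the slack needed when converting the constraint on $\beta$ back into a constraint on $t$ in a way that covers both the subexponential ($\alpha \le 1$) and superexponential ($\alpha \ge 1$) regimes by the same formula. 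There is no real conceptual obstacle; the whole argument is a one-line integral estimate together with careful bookkeeping of constants so that both the small-$n$ regime (where the constant term in the maximum dominates) and the large-$n$ regime (where the logarithmic term dominates) are handled simultaneously.
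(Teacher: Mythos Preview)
Your approach is correct and is genuinely different from the paper's. The paper does not integrate the tail directly; instead it shifts by $c \sim (\log n)^{1/\alpha}$, bounds the tail of the shifted variable (Lemma~\ref{auxlem2}), and then uses the two-sided estimate $c_\alpha(x^\alpha+y^\alpha) \le (x+y)^\alpha \le C_\alpha(x^\alpha+y^\alpha)$ twice to pass between $Y^\alpha$ and $(Y-c)_+^\alpha + c^\alpha$ (Lemma~\ref{auxlem3}). That double use of \eqref{auxlem1} is exactly where the constant $C = (C_\alpha/c_\alpha)^{1/\alpha} = 2^{|1-1/\alpha|}$ enters. Your route bypasses this completely: the clean bound $(2n)^{1/\beta}\,\beta/(\beta-1)$ you obtain is already $\le 2$ as soon as $\beta = (t/K)^\alpha$ equals $\max\{(\sqrt 2+1)/(\sqrt 2-1),\, (2/\log 2)\log n\}$, with no extra factor. (Both $(2n)^{1/\beta}$ and $\beta/(\beta-1)$ are decreasing in $\beta$; in the large-$n$ regime the product equals $\sqrt 2\cdot 2^{1/\beta}\,\beta/(\beta-1)$, which is again decreasing, so the worst case is at the crossover $\beta = 3+2\sqrt 2$, where a short computation gives a value strictly below $2$.) In other words, your argument actually proves the lemma with $C=1$ for every $\alpha$, which is sharper than the stated $C$.

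This makes your final paragraph the one weak spot: the sentence explaining $C = 2^{|1-1/\alpha|}$ as ``slack needed when converting the constraint on $\beta$ back into a constraint on $t$'' is not right---the conversion $t = K\beta^{1/\alpha}$ is exact, and nothing in your computation produces that $\alpha$-dependent prefactor. It is an artifact of the paper's different proof, not of yours; you should simply drop it and record that your method gives $C=1$.
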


Note that for $\alpha \ge 1$, \cite[Proposition 4.3.1]{PG99} provides a similar result. However, we are also interested in the case of $\alpha < 1$ in the present note. The condition $\mathbb{E}X_i = 0$ in Lemma \ref{maxOrl} can easily be removed only at the expense of a different absolute constant.

We are now ready to prove Theorem \ref{V1.3subg}.



\begin{proof}[Proof of Theorem \ref{V1.3subg}]
We shall adapt the arguments from \cite{Ver19}. First let
\[
\mathcal{E}_k \coloneqq \Big\{ \prod_{i=k}^d \lVert X_i \rVert_2 \le 2 n^{(d-k+1)/2}\Big\},\qquad k = 1, \ldots, d,
\]
and let $\mathcal{E}_{d+1}$ be the full space. It then follows from Lemma \ref{V3.2g} for $u=1$ that
\begin{equation}\label{E^cg}
\mathbb{P}(\mathcal{E}) \ge 1 - 2 \exp\Big(-\frac{1}{C_\alpha}\Big(\frac{n^{1/2}}{K^2d^{1/2}}\Big)^\alpha\Big),
\end{equation}
where $\mathcal{E} \coloneqq \mathcal{E}_2 \cap \cdots \cap \mathcal{E}_d$.

Now fix any realization $x_2, \ldots, x_d$ of the random vectors $X_2, \ldots, X_d$ in $\mathcal{E}_2$ and apply Proposition \ref{prop-alpha} to the function $f_1(x_1)$ given by $x_1 \mapsto f(x_1, \ldots x_d)$. Clearly, $f_1$ is convex, and since
\[
|f(x \otimes x_2 \otimes \cdots \otimes x_d) - f(y \otimes x_2 \otimes \cdots \otimes x_d)| \le \lVert x - y \rVert_2 \prod_{i=2}^d \lVert x_i \rVert_2 \le \lVert x - y \rVert_2 2 n^{(d-1)/2},
\]
we see that it is $2 n^{(d-1)/2}$-Lipschitz. Hence, it follows from \eqref{orliczformulation} that
\begin{equation}\label{convexsteps1}
\lVert f - \mathbb{E}_{X_1} f \rVert_{\Psi_\alpha(X_1)} \le c_\alpha n^{(d-1)/2} \lVert \max_j |X_{1,j}| \rVert_{\Psi_\alpha}
\end{equation}
for any $x_2, \ldots, x_d$ in $\mathcal{E}_2$, where $\mathbb{E}_{X_1}$ denotes taking the expectation with respect to $X_1$ (which, by independence, is the same as conditionally on $X_2, \ldots, X_d$).

To continue, fix any realization $x_3, \ldots, x_d$ of the random vectors $X_3, \ldots, X_d$ which satisfy $\mathcal{E}_3$ and apply Proposition \ref{prop-alpha} to the function $f_2(x_2)$ given by $x_2 \mapsto \mathbb{E}_{X_1}f(X_1, x_2, \ldots, x_d)$. Again, $f_2$ is a convex function, and since
\begin{align*}
&|\mathbb{E}_{X_1}f(X_1 \otimes x \otimes x_3 \otimes \ldots \otimes x_d) -  \mathbb{E}_{X_1}f(X_1 \otimes y \otimes x_3 \otimes \ldots \otimes x_d)|\\
&\le \mathbb{E}_{X_1} \lVert X_1 \otimes (x-y) \otimes x_3 \otimes \ldots \otimes x_d \rVert_2 \le (\mathbb{E}\lVert X_1 \rVert_2^2)^{1/2} \lVert x - y \rVert_2 \prod_{i=3}^d \lVert x_i \rVert_2\\
&\le \sqrt{n} \lVert x - y \rVert_2 \cdot 2 n^{(d-2)/2} = \lVert x - y \rVert_2 \cdot 2 n^{(d-1)/2},
\end{align*}
$f_2$ is $2 n^{(d-1)/2}$-Lipschitz. Applying \eqref{orliczformulation}, we thus obtain
\begin{equation}\label{convexsteps2}
\lVert \mathbb{E}_{X_1} f  - \mathbb{E}_{X_1,X_2} f \rVert_{\Psi_\alpha(X_2)} \le c_\alpha n^{(d-1)/2} \lVert \max_j |X_{2,j}| \rVert_{\Psi_\alpha}
\end{equation}
for any $x_3, \ldots, x_d$ in $\mathcal{E}_3$. Iterating this procedure, we arrive at
\begin{equation}\label{convexstepsg}
\lVert \mathbb{E}_{X_1, \ldots, X_{k-1}} f  - \mathbb{E}_{X_1,\ldots, X_k} f \rVert_{\Psi_\alpha(X_k)} \le c_\alpha n^{(d-1)/2} \lVert \max_j |X_{k,j}| \rVert_{\Psi_\alpha}
\end{equation}
for any realization $x_{k+1}, \ldots, x_d$ of $X_{k+1}, \ldots, X_d$ in $\mathcal{E}_{k+1}$.

We now combine \eqref{convexstepsg} for $k = 1, \ldots, d$. To this end, we write 
\[
\Delta_k \coloneqq \Delta_k(X_k, \ldots, X_d) \coloneqq \mathbb{E}_{X_1, \ldots, X_{k-1}}f - \mathbb{E}_{X_1, \ldots, X_k}f,
\]
and apply Proposition \ref{proptails}. Here we have to distinguish between the cases where $\alpha \in [1,2]$ and $\alpha \in (0,1)$. If $\alpha \ge 1$, we use \eqref{mgf} to arrive at a bound for the moment-generating function. Writing $M_k \coloneqq \lVert \max_j |X_{k,j}| \rVert_{\Psi_\alpha}$, we obtain
\[
\mathbb{E}\exp(\lambda \Delta_k) \le \begin{cases}
    \exp((c_\alpha n^{(d-1)/2} M_k)^2 \lambda^2)\\ \exp((c_\alpha n^{(d-1)/2}M_k)^{\alpha/(\alpha-1)} |\lambda|^{\alpha/(\alpha-1)})
    \end{cases}
\]
for all $x_{k+1}, \ldots, x_d$ in $\mathcal{E}_{k+1}$, where the first line holds if $|\lambda| \le 1/(c_\alpha n^{(d-1)/2}M_k)$ and the second one if $|\lambda| \ge 1/(c_\alpha n^{(d-1)/2}M_k)$ and $\alpha > 1$. For the simplicity of presentation, temporarily assume that $c_\alpha n^{(d-1)/2} = 1$ (alternatively, replace $M_k$ by $c_\alpha n^{(d-1)/2}M_k$ in the following arguments) and that $M_1 \le \ldots \le M_d$. Using Lemma \ref{V4.1}, we obtain
\begin{align*}
&\qquad \mathbb{E} \exp(\lambda(f-\mathbb{E}f))1_\mathcal{E} = \mathbb{E}\exp(\lambda(\Delta_1 + \cdots + \Delta_d))1_\mathcal{E}\\
&\le \exp((M_1^2 + \ldots + M_k^2) \lambda^2 + (M_{k+1}^{\alpha/(\alpha-1)} + \ldots + M_d^{\alpha/(\alpha-1)}) |\lambda|^{\alpha/(\alpha-1)})
\end{align*}
for $|\lambda| \in [1/M_{k+1},1/M_k]$, where we formally set $M_0 \coloneqq 0$ and $M_{d+1} \coloneqq \infty$. In particular, setting $M \coloneqq (M_1^2 + \ldots + M_d^2)^{1/2}$, we have
\[
\mathbb{E} \exp(\lambda(f-\mathbb{E}f))1_\mathcal{E} \le \exp(M^2 \lambda^2)
\]
for all $|\lambda| \le 1/M_d = 1/(\max_k M_k)$. Furthermore, for $\alpha > 1$ it is not hard to see that
\[
(M_1^2 + \ldots + M_k^2) \lambda^2 + (M_{k+1}^{\alpha/(\alpha-1)} + \ldots + M_d^{\alpha/(\alpha-1)}) |\lambda|^{\alpha/(\alpha-1)} \le M^{\alpha/(\alpha-1)}|\lambda|^{\alpha/(\alpha-1)}
\]
if $|\lambda| \in [1/M_{k+1},1/M_k]$ for some $k = 0, 1, \ldots, d-1$ or $|\lambda| \in [1/M,1/M_d]$ for $k=d$. Indeed, by monotonicity (divide by $\lambda^2$ and compare the coefficients) it suffices to check this for $\lambda = 1/M_{k+1}$ or $\lambda = 1/M$ if $k=d$. The cases of $k=0$ and $k=d$ follow by simple calculations. In the general case, set $x^2 = (M_1^2 + \ldots + M_{k+1}^2)/M_{k+1}^2$ and $y^{\alpha/(\alpha-1)} = (M_{k+2}^{\alpha/(\alpha-1)} + \ldots + M_d^{\alpha/(\alpha-1)})/M_{k+1}^{\alpha/(\alpha-1)}$. Clearly, $(x^2 + y^{\alpha/(\alpha-1)})^{(\alpha-1)/\alpha} \le (x^2 + y^2)^{1/2}$ since $x \ge 1$ and $\alpha/(\alpha-1) \ge 2$. Moreover, $y^2 \le (M_{k+2}^2 + \ldots + M_d^2)/M_{k+1}^2$, which proves the inequality. Altogether, inserting the factor $c_\alpha n^{(d-1)/2}$ again, we therefore obtain
\begin{align}\label{estimateMGF}
&\mathbb{E} \exp(\lambda(f-\mathbb{E}f))1_\mathcal{E} = \mathbb{E}\exp(\lambda(\Delta_1 + \cdots + \Delta_d))1_\mathcal{E}\notag\\
&\qquad \le \begin{cases}
    \exp((c_\alpha n^{(d-1)/2})^2 M^2 \lambda^2) \\ \exp((c_\alpha n^{(d-1)/2})^{\alpha/(\alpha-1)} M^{\alpha/(\alpha-1)} |\lambda|^{\alpha/(\alpha-1)}),
    \end{cases}
\end{align}
where the first line holds if $|\lambda| \le 1/(c_\alpha n^{(d-1)/2}M)$ and the second one if $|\lambda| \ge 1/(c_\alpha n^{(d-1)/2}M)$ and $\alpha > 1$.

On the other hand, if $\alpha < 1$, we use \eqref{mgfhochalpha}. Together with Lemma \ref{V4.1} and the subadditivity of $|\cdot|^\alpha$ for $\alpha \in (0,1)$, this yields
\begin{align}\label{estimateMGFg}
\begin{split}
&\mathbb{E} \exp(\lambda^\alpha|f-\mathbb{E}f|^\alpha)1_\mathcal{E} \le  \mathbb{E}\exp(\lambda^\alpha(|\Delta_1|^\alpha + \cdots + |\Delta_d|^\alpha))1_\mathcal{E}\\
&\le \exp((c_\alpha n^{(d-1)/2})^\alpha (M_1^\alpha + \cdots + M_d^\alpha) \lambda^\alpha)
\end{split}
\end{align}
for $\lambda \in [0, 1/(c_\alpha n^{(d-1)/2} \max_k M_k)]$.

To finish the proof, first consider $\alpha \in [1,2]$. Then, for any $\lambda > 0$, we have
\begin{align}\label{estimate1}
\begin{split}
\mathbb{P}(f - \mathbb{E}f > t) &\le \mathbb{P}(\{f - \mathbb{E}f > t \} \cap \mathcal{E}) + \mathbb{P}(\mathcal{E}^c)\\
&\le \mathbb{P}(\exp(\lambda(f-\mathbb{E}f))1_\mathcal{E} > \exp(\lambda t)) + \mathbb{P}(\mathcal{E}^c)\\
&\le \exp\Big(- \Big(\frac{t}{c_\alpha n^{(d-1)/2} M}\Big)^\alpha\Big) + 2 \exp\Big(-\frac{1}{C_\alpha}\Big(\frac{n^{1/2}}{K^2d^{1/2}}\Big)^\alpha\Big).
\end{split}
\end{align}
where the last step follows by standard arguments (similarly as in the proof of Proposition \ref{proptails} given in the appendix), using \eqref{estimateMGF} and \eqref{E^cg}.
Now, assume that $t \le c_\alpha n^{d/2} M/(K^2d^{1/2})$. Then, the right-hand side of \eqref{estimate1} is dominated by the first term (possibly after adjusting constants), so that we arrive at
\[
\mathbb{P}(f - \mathbb{E}f > t) \le 3 \exp\Big(- \frac{1}{C_\alpha} \Big(\frac{t}{n^{(d-1)/2} M}\Big)^\alpha\Big).
\]
The same arguments hold if $f$ is replaced by $-f$. Adjusting constants by \eqref{eqn:constantAdjustment}, we obtain that for any $t \in [0, c_\alpha n^{d/2} M/(K^2d^{1/2})]$,
	\begin{equation}\label{V1.3subgsp}
	\mathbb{P}(|f(X) - \mathbb{E}f(X)| > t) \le 2 \exp\Big(-\frac{1}{C_\alpha}\Big( \frac{t}{n^{(d-1)/2} M}\Big)^\alpha\Big).
	\end{equation}
Now it remains to note that by Lemma \ref{maxOrl}, we have
\[
\lVert \max_j |X_{i,j}| \rVert_{\Psi_\alpha} \le C_\alpha (\log n)^{1/\alpha} \max_j \lVert X_{i,j} \rVert_{\Psi_\alpha} \le C_\alpha (\log n)^{1/\alpha} K.
\]

If $\alpha \in (0,1)$, similarly to \eqref{estimate1}, using \eqref{estimateMGFg}, \eqref{E^cg} and Proposition \ref{proptails},
\begin{align*}
\mathbb{P}(|f - \mathbb{E}f| > t) &\le \mathbb{P}(\{|f - \mathbb{E}f| > t \} \cap \mathcal{E}) + \mathbb{P}(\mathcal{E}^c)\\
&\le 2\exp\Big(- \Big(\frac{t}{c_\alpha n^{(d-1)/2} M_\alpha}\Big)^\alpha\Big) + 2 \exp\Big(-\frac{1}{C_\alpha}\Big(\frac{n^{1/2}}{K^2d^{1/2}}\Big)^\alpha\Big),
\end{align*}
where $M_\alpha \coloneqq (M_1^\alpha + \ldots + M_d^\alpha)^{1/\alpha}$. The rest follows as above.
\end{proof}

\appendix
\section{}

\begin{proof}[Proof of Proposition \ref{proptails}]
The equivalence of \eqref{SubExpT}, \eqref{L^pChar}, \eqref{ON} and \eqref{mgfhochalpha} is easily seen by directly adapting the arguments from the proof of \cite[Proposition 2.5.2]{Ver18}. 
To see that these properties imply \eqref{mgf}, first note that since in particular $\lVert X \rVert_{\Psi_1} < \infty$, the bound for $|\lambda| \le 1/C_{5,\alpha}'$ directly follows from \cite{Ver18}, Proposition 2.7.1 (e).
To see the bound for large values of $|\lambda|$, we infer that by the weighted arithmetic-geometric mean inequality (with weights $\alpha-1$ and $1$),
\[y^{(\alpha-1)/\alpha} z^{1/\alpha} \le \frac{\alpha-1}{\alpha} y + \frac{1}{\alpha} z\]
for any $y,z \ge 0$. Setting $y \coloneqq |\lambda|^{\alpha/(\alpha-1)}$ and $z \coloneqq |x|^\alpha$, we may conclude that
\[\lambda x \le \frac{\alpha-1}{\alpha} |\lambda|^{\alpha/(\alpha-1)} + \frac{1}{\alpha} |x|^\alpha\]
for any $\lambda, x \in \mathbb{R}$. Consequently, using \eqref{mgfhochalpha} assuming $C_{4,\alpha} = 1$, for any $|\lambda| \ge 1$
\begin{align*}
    \mathbb{E} \exp(\lambda X) &\le \exp\big(\frac{\alpha-1}{\alpha} |\lambda|^{\alpha/(\alpha-1)}\big)\mathbb{E}\exp(|X|^\alpha/\alpha)\\ &\le \exp\big(\frac{\alpha-1}{\alpha} |\lambda|^{\alpha/(\alpha-1)}\big) \exp(1/\alpha) \le \exp(|\lambda|^{\alpha/(\alpha-1)}).
\end{align*}
This yields \eqref{mgf} for $|\lambda| \ge 1/C_{5,\alpha}''$. The claim now follows by taking $C_{5,\alpha} \coloneqq \max (C_{5,\alpha}', C_{5,\alpha}'')$.

Finally, starting with \eqref{mgf} assuming $C_{5,\alpha}=1$, let us check \eqref{SubExpT}. To this end, note that for any $\lambda > 0$,
\[
    \mathbb{P}(X \ge t) \le \exp(-\lambda t) \mathbb{E}\exp(\lambda X) \le \exp(-\lambda t + \lambda^2 \eins_{\{\lambda \le 1\}} +  \lambda^{\alpha/(\alpha-1)}\eins_{\{\lambda > 1\}}).
\]
Now choose $\lambda \coloneqq t/2$ if $t \le 2$, $\lambda \coloneqq ((\alpha-1)t/\alpha)^{\alpha-1}$ if $t \ge \alpha/(\alpha-1)$ and $\lambda \coloneqq 1$ if $t \in (2, \alpha/(\alpha-1))$. This yields
\[\mathbb{P}(X \ge t) \le \begin{cases}
\exp(-t^2/4) & \text{if} \ t \le 2,\\
\exp(-(t-1)) & \text{if} \ t \in (2, \alpha/(\alpha-1)),\\
\exp(-\frac{(\alpha-1)^{\alpha-1}}{\alpha^\alpha} t^\alpha) & \text{if} \ t \ge \alpha/(\alpha-1).
\end{cases}\]
Now use \eqref{eqn:constantAdjustment}, \eqref{from2toalpha} and the fact that $\exp(-(t-1)) \le \exp(- t^\alpha/C_\alpha^\alpha)$ for any $t \in (2, \alpha/(\alpha-1))$. It follows that
\[\mathbb{P}(X \ge t) \le 2 \exp(- t^\alpha/C_{1,\alpha}'^\alpha)\]
for any $t \ge 0$. The same argument for $-X$ completes the proof.
\end{proof}

\begin{proof}[Proof of Lemma \ref{V3.1}]
	By the arithmetic and geometric means inequality and since $\mathbb{E}\lVert X_i \rVert_2 \le \sqrt{n}$, for any $s \ge 0$,
	\begin{align}\begin{split}\label{st}
	\mathbb{P}\Big(\prod_{i=1}^d \lVert X_i \rVert_2 > (\sqrt{n} + s)^d \Big) &\le \mathbb{P}\Big(\frac{1}{d}\sum_{i = 1}^d (\lVert X_i \rVert_2 - \sqrt{n}) > s\Big)\\
	&\le \mathbb{P}\Big(\frac{1}{d}\sum_{i = 1}^d (\lVert X_i \rVert_2 - \mathbb{E}\lVert X_i \rVert_2) > s\Big).
	\end{split}
	\end{align}
	Moreover, by \eqref{nov} and \cite[Corollary A.5]{GSS19},
	\[
	\big\lVert \lVert X_i \rVert_2 - \mathbb{E}\lVert X_i \rVert_2 \big\rVert_{\Psi_\alpha} = \big\lVert \lVert X_i \rVert_2 - \sqrt{n} - (\mathbb{E}\lVert X_i \rVert_2 - \sqrt{n}) \big\rVert_{\Psi_\alpha} \le C_\alpha K^2
	\]
	for any $i = 1, \ldots, d$. On the other hand, if $Y_1, \ldots, Y_d$ are independent centered random variables with $\lVert Y_i \rVert_{\Psi_\alpha} \le M$, we have
	\begin{align*}
	\mathbb{P}\Big(\frac{1}{d}\Big|\sum_{i=1}^d Y_i\Big| \ge s\Big) &\le 2 \exp\Big(-\frac{1}{C_\alpha}\min\Big( \Big(\frac{s\sqrt{d}}{M}\Big)^2, \Big(\frac{s\sqrt{d}}{M}\Big)^\alpha \Big)\Big)\\
	&\le 2 \exp\Big(-\frac{1}{C_\alpha} \Big(\frac{s\sqrt{d}}{M}\Big)^\alpha \Big).
	\end{align*}
	Here, the first estimate follows from \cite{GK95} ($\alpha > 1$) and \cite{HMO97} ($\alpha \le 1$), while the last step follows by \eqref{from2toalpha}.
	As a consequence, \eqref{st} can be bounded by $2 \exp(-s^\alpha d^{\alpha/2} / (K^{2\alpha}C_\alpha))$.
	
	For $u \in [0,2]$ and $s = u \sqrt{n}/2d$, we have $ (\sqrt{n} + s)^d \le n^{d/2}(1+u)$. Plugging in, we arrive at
	\[
	\mathbb{P}\Big(\prod_{i=1}^d \lVert X_i \rVert_2 > n^{d/2}(1 + u) \Big) \le 2 \exp\Big(-\frac{1}{C_\alpha} \Big(\frac{n^{1/2} u}{K^2 d^{1/2}}\Big)^\alpha\Big).
	\]
	Now set $u  \coloneqq t/n^{d/2}$.
\end{proof}

\begin{proof}[Proof of Lemma \ref{V3.2g}]
	Let us first recall the partition into ``binary sets'' which appears in the proof of \cite[Lemma 3.2]{Ver19}. Here we assume that $d = 2^L$ for some $L \in \mathbb{N}$ (if not, increase $d$). Then, for any $\ell \in \{0,1, \ldots, L \}$, we consider the partition $\mathcal{I}_\ell$ of $\{1, \ldots, d\}$ into $2^\ell$ successive (integer) intervals of length $d_\ell \coloneqq d/2^\ell$ which we call ``binary intervals''. It is not hard to see that for any $k = 1, \ldots, d$, we can partition $[1,k]$ into binary intervals of different lengths such that this partition contains at most one interval of each family $\mathcal{I}_\ell$.
	
	Now it suffices to prove that
	\[
	\mathbb{P} \Big(\exists 0 \le \ell \le L, \exists I \in \mathcal{I}_\ell \colon \prod_{i\in I} \lVert X_i \rVert_2 > (1 + 2 ^{-\ell/4}u)n^{d_\ell/2}\Big) \le 2 \exp\Big(-\frac{1}{C_\alpha} \Big(\frac{n^{1/2}u}{K^2d^{1/2}}\Big)^\alpha \Big)
	\]
	(cf.\ Step 3 of the proof of \cite[Lemma 3.2]{Ver19}, where the reduction to this case is explained in detail). To this end, for any $\ell \in \{0, 1, \ldots, L\}$, any $I \in \mathcal{I}_\ell$ and $d_\ell \coloneqq |I| = d/2^\ell$, we apply Lemma \ref{V3.1} for $d_\ell$ and $t \coloneqq 2^{-\ell/4} n^{d_\ell/2}u$. This yields
	\begin{align*}
	\mathbb{P} \Big(\prod_{i\in I} \lVert X_i \rVert_2 > (1 + 2 ^{-\ell/4}u)n^{d_\ell/2}\Big) &\le 2 \exp\Big(-\frac{1}{C_\alpha} \Big(\frac{n^{1/2}u}{2^{\ell/4}K^2d_\ell^{1/2}}\Big)^\alpha\Big)\\ &= 2 \exp\Big(-\frac{1}{C_\alpha} \Big(2^{\ell/4}\frac{n^{1/2}u}{K^2d^{1/2}}\Big)^\alpha\Big).
	\end{align*}
	Altogether, we arrive at
	\begin{align}\label{ZwSchr}
	\begin{split}
	\mathbb{P} \Big(\exists \ell \in \{0, 1, \ldots, L\}, \exists I \in \mathcal{I}_\ell \colon &\prod_{i\in I} \lVert X_i \rVert_2 > (1 + 2 ^{-\ell/4}u)n^{d_\ell/2}\Big)\\ &\le \sum_{\ell=0}^L 2^\ell \cdot 2 \exp\Big(-\frac{1}{C_\alpha}\Big(2^{\ell/4}\frac{n^{1/2}u}{K^2d^{1/2}}\Big)^\alpha\Big).
	\end{split}
	\end{align}
	We may now assume that $(n^{1/2} u/(K^2d^{1/2}))^\alpha/C_\alpha \ge 1$ (otherwise the bound in Lemma \ref{V3.2g} gets trivial by adjusting $C_\alpha$). Using the elementary inequality $ab \ge (a+b)/2$ for all $a,b \ge 1$, we arrive at
	\[
	2^{\ell\alpha/4} \frac{1}{C_\alpha}\Big(\frac{n^{1/2}u}{K^2d^{1/2}}\Big)^\alpha \ge \frac{1}{2} \Big(2^{\ell\alpha/4} + \frac{1}{C_\alpha} \Big(\frac{n^{1/2}u}{K^2 d^{1/2}}\Big)^\alpha\Big).
	\]
	Using this in \eqref{ZwSchr}, we obtain the upper bound
	\[
	2 \exp\Big(- \frac{1}{2C_\alpha} \Big(\frac{n^{1/2}u}{K^2d^{1/2}}\Big)^\alpha\Big) \sum_{\ell=0}^L 2^\ell \exp(- 2^{\ell\alpha/4 - 1}) \le c_\alpha \exp\Big(-\frac{1}{2C_\alpha} \Big(\frac{n^{1/2}u}{K^2d^{1/2}}\Big)^\alpha \Big).
	\]
	By \eqref{eqn:constantAdjustment}, we can assume $c_\alpha=2$.
\end{proof}

To prove Lemma \ref{maxOrl}, we first present a number of lemmas and auxiliary statements. In particular, recall that if $\alpha \in (0, \infty)$, then for any $x,y \in (0,\infty)$,
    \begin{equation}\label{auxlem1}
    c_\alpha(x^\alpha + y^\alpha) \le (x+y)^\alpha \le \tilde{c}_\alpha(x^\alpha+y^\alpha),
    \end{equation}
where $c_\alpha \coloneqq 2^{\alpha-1} \wedge 1$ and $\tilde{c}_\alpha \coloneqq 2^{\alpha-1} \vee 1$. Indeed, if $\alpha \le 1$, using the concavity of the function $x \mapsto x^\alpha$ it follows by standard arguments that $2^{\alpha-1}(x^\alpha + y^\alpha) \le (x+y)^\alpha \le x^\alpha + y^\alpha$. Likewise, for $\alpha \ge 1$, using the convexity of $x \mapsto x^\alpha$ we obtain $x^\alpha + y^\alpha \le (x+y)^\alpha \le 2^{\alpha-1}(x^\alpha + y^\alpha)$.

\begin{lemma}\label{auxlem2}
    Let $X_1, \ldots, X_n$ be independent, centered random variables such that $\lVert X_i \rVert_{\Psi_\alpha} \le 1$ for some $\alpha > 0$. Then, if $Y \coloneqq \max_i |X_i|$ and $c \coloneqq (c_\alpha^{-1}\log n)^{1/\alpha}$, we have
    \[
    \mathbb{P}(Y \ge c + t) \le 2 \exp(-c_\alpha t^\alpha)
    \]
    with $c_\alpha$ as in \eqref{auxlem1}.
\end{lemma}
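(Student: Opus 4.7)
The plan is to combine a union bound with the standard tail bound implied by the Orlicz norm constraint, and then extract the optimal constant from the elementary inequality \eqref{auxlem1}. The mean-zero assumption will not actually play a role here; only the Orlicz norm bound is needed. Note that the result is a slight strengthening of the ``trivial'' tail estimate $\mathbb{P}(Y \ge s) \le 2n\exp(-s^\alpha)$ in that it isolates a clean additive shift $c = (c_\alpha^{-1}\log n)^{1/\alpha}$ beyond which the tail decays like $2\exp(-c_\alpha t^\alpha)$.

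First I would recall that $\lVert X_i \rVert_{\Psi_\alpha} \le 1$ is by definition the same as $\mathbb{E}\exp(|X_i|^\alpha) \le 2$. Applying Markov's inequality to $\exp(|X_i|^\alpha)$ yields the pointwise tail bound $\mathbb{P}(|X_i| \ge s) \le 2\exp(-s^\alpha)$ for every $s \ge 0$. A straightforward union bound over $i=1,\ldots,n$ then gives, for any $s \ge 0$,
\[
\mathbb{P}(Y \ge s) \le \sum_{i=1}^n \mathbb{P}(|X_i| \ge s) \le 2n\exp(-s^\alpha).
\]

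Next I would specialize this to $s = c + t$ and invoke the lower bound in \eqref{auxlem1}, namely $(c+t)^\alpha \ge c_\alpha(c^\alpha + t^\alpha)$, to split the exponent:
\[
\mathbb{P}(Y \ge c + t) \le 2n \exp\bigl(-c_\alpha c^\alpha\bigr)\exp\bigl(-c_\alpha t^\alpha\bigr).
\]
Finally, with the specific choice $c = (c_\alpha^{-1}\log n)^{1/\alpha}$, we get $c_\alpha c^\alpha = \log n$, so $2n\exp(-c_\alpha c^\alpha) = 2$, which produces exactly the claimed bound $2\exp(-c_\alpha t^\alpha)$.

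There is no real obstacle here: the only ingredient beyond the standard Markov-bound argument is the correct use of \eqref{auxlem1} to separate $c^\alpha$ from $t^\alpha$ with the constant $c_\alpha$, and the calibration $c = (c_\alpha^{-1}\log n)^{1/\alpha}$ is precisely what makes the prefactor $2n\exp(-c_\alpha c^\alpha)$ collapse to $2$. This lemma will presumably feed into the proof of Lemma~\ref{maxOrl} by integrating the tail bound to estimate $\mathbb{E}\exp(|Y|^\alpha/K^\alpha)$ for a suitable $K$ of order $\max\{1,(\log n)^{1/\alpha}\}$.
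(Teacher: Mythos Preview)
Your proof is correct and follows exactly the paper's argument: union bound, tail estimate from the Orlicz norm, then splitting $(c+t)^\alpha$ via \eqref{auxlem1} and calibrating $c$ so that the factor $n$ cancels. Your remark that the mean-zero hypothesis is not used is also accurate.
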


\begin{proof}
We have
\begin{align*}
    \mathbb{P}(Y \ge c + t) &\le n \mathbb{P}(|X_i| \ge c + t) \le 2n \exp(-(c+t)^\alpha)\\
    &\le 2n \exp(-c_\alpha (t^\alpha + c^\alpha) = 2 \exp(-c_\alpha t^\alpha),
\end{align*}
where we have used \eqref{auxlem1} in the next-to-last step.
\end{proof}

\begin{lemma}\label{auxlem3}
    Let $Y \ge 0$ be a random variable which satisfies
    \[
    \mathbb{P}(Y \ge c + t) \le 2 \exp(-t^\alpha)
    \]
    for some $c \ge 0$ and any $t \ge 0$. Then,
    \[
    \lVert Y \rVert_{\Psi_\alpha} \le \tilde{c}_\alpha^{1/\alpha} \max\Big\{\Big(\frac{\sqrt{2}+1}{\sqrt{2}-1}\Big)^{1/\alpha},c\Big(\frac{2}{\log 2}\Big)^{1/\alpha}\Big\}
    \]
    with $\tilde{c}_\alpha$ as in \eqref{auxlem1}.
\end{lemma}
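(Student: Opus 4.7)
The plan is to realize $\lVert Y \rVert_{\Psi_\alpha}$ by finding an explicit $K$ for which $\mathbb{E}\exp((Y/K)^\alpha) \le 2$ holds, and then showing that $K$ equals the right-hand side of the claimed bound. The key observation is that the tail bound $\mathbb{P}(Y \ge c+t) \le 2\exp(-t^\alpha)$ can be rewritten as a tail bound for $(Y-c)_+$ at the natural scale, and the overhead coming from the shift $c$ will be absorbed into a deterministic factor $\exp(C_\alpha c^\alpha / K^\alpha)$.

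Concretely, I would first apply \eqref{auxlem1} in the form $Y^\alpha \le C_\alpha((Y-c)_+^\alpha + c^\alpha)$ (valid pointwise on $\{Y \ge 0\}$), which gives
\[
\mathbb{E}\exp\bigl((Y/K)^\alpha\bigr) \le \exp\bigl(C_\alpha c^\alpha/K^\alpha\bigr)\,\mathbb{E}\exp\bigl(C_\alpha (Y-c)_+^\alpha/K^\alpha\bigr).
\]
Next I would compute the second factor via the layer-cake identity
\[
\mathbb{E}\exp\bigl(\lambda (Y-c)_+^\alpha\bigr) = 1 + \int_0^\infty \lambda\alpha u^{\alpha-1}\exp(\lambda u^\alpha)\,\mathbb{P}((Y-c)_+ \ge u)\,du,
\]
insert $\mathbb{P}((Y-c)_+ \ge u) \le 2\exp(-u^\alpha)$, and substitute $v = u^\alpha$ to reduce the integral to $2\lambda\int_0^\infty e^{-(1-\lambda)v}dv = 2\lambda/(1-\lambda)$, valid whenever $\lambda < 1$. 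This yields $\mathbb{E}\exp(\lambda (Y-c)_+^\alpha) \le (1+\lambda)/(1-\lambda)$.

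Setting $\lambda \coloneqq C_\alpha/K^\alpha$ (which is $<1$ as soon as $K^\alpha > C_\alpha$) and combining gives
\[
\mathbb{E}\exp\bigl((Y/K)^\alpha\bigr) \le \exp\!\bigl(C_\alpha c^\alpha/K^\alpha\bigr)\cdot \frac{1 + C_\alpha/K^\alpha}{1 - C_\alpha/K^\alpha}.
\]
To conclude, I would demand that each of the two factors be at most $\sqrt{2}$, so that the product is at most $2$. An elementary manipulation shows that $(1+\rho)/(1-\rho) \le \sqrt{2}$ is equivalent to $\rho \le (\sqrt{2}-1)/(\sqrt{2}+1)$, which translates to the requirement $K^\alpha \ge C_\alpha (\sqrt{2}+1)/(\sqrt{2}-1)$. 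Similarly, $\exp(C_\alpha c^\alpha/K^\alpha) \le \sqrt{2}$ is equivalent to $K^\alpha \ge 2 C_\alpha c^\alpha/\log 2$. Taking the larger of these two thresholds gives exactly the bound claimed in the statement.

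The main obstacle is not a deep one: it lies in the bookkeeping at step two, namely producing a closed-form bound for $\mathbb{E}\exp(\lambda(Y-c)_+^\alpha)$ with constants sharp enough to leave room for the factor $\exp(C_\alpha c^\alpha/K^\alpha)$. The split $2 = \sqrt{2}\cdot\sqrt{2}$ is what dictates the particular constants $(\sqrt{2}+1)/(\sqrt{2}-1)$ and $2/\log 2$ appearing in the statement; any other balanced split would give constants of the same flavor. Once this balancing is carried out, no further argument is needed.
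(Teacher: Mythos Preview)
Your proof is correct and essentially identical to the paper's: both use the pointwise bound $Y^\alpha \le C_\alpha((Y-c)_+^\alpha + c^\alpha)$ from \eqref{auxlem1}, compute $\mathbb{E}\exp(\lambda(Y-c)_+^\alpha)$ via layer-cake and the assumed tail bound to get $(1+\lambda)/(1-\lambda)$, and then balance the two factors against $\sqrt{2}$ each. The only differences are cosmetic (the paper substitutes $t = K C_\alpha^{-1/\alpha}$ early on and writes the layer-cake integral as $\int_1^\infty y^{-t^\alpha}\,dy$ rather than your $\int_0^\infty e^{-(1-\lambda)v}\,dv$).
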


\begin{proof}
By \eqref{auxlem1} and monotonicity, we have $Y^\alpha \le \tilde{c}_\alpha((Y-c)_+^\alpha + c^\alpha)$, where $x_+ \coloneqq \max(x,0)$. Thus,
\begin{align*}
\mathbb{E}\exp\Big(\frac{Y^\alpha}{s^\alpha}\Big) &\le \exp\Big(\frac{\tilde{c}_\alpha c^\alpha}{s^\alpha}\Big) \mathbb{E}\exp\Big(\frac{\tilde{c}_\alpha (Y-c)_+^\alpha}{s^\alpha}\Big)\\
&= \exp\Big(\frac{c^\alpha}{t^\alpha}\Big) \mathbb{E}\exp\Big(\frac{(Y-c)_+^\alpha}{t^\alpha}\Big) \eqqcolon I_1 \cdot I_2,
\end{align*}
where we have set $t \coloneqq s\tilde{c}_\alpha^{-1/\alpha}$. Obviously, $I_1 \le \sqrt{2}$ if $t \ge c(1/\log\sqrt{2})^{1/\alpha}$. As for $I_2$, we have
\begin{align*}
    I_2 &= 1 + \int_1^\infty \mathbb{P}((Y-c)_+\ge t(\log y)^{1/\alpha}) dy\\
    &\le 1 + 2 \int_1^\infty \exp(-t^\alpha \log y) dy = 1 + 2\int_1^\infty \frac{1}{y^{t^\alpha}} dy \le \sqrt{2}
\end{align*}
if $t \ge ((\sqrt{2}+1)/(\sqrt{2}-1))^{1/\alpha}$. Therefore, $I_1I_2 \le 2$ if $t \ge \max\{((\sqrt{2}+1)/(\sqrt{2}-1))^{1/\alpha}, c(2/\log 2)^{1/\alpha}\}$, which finishes the proof.
\end{proof}

Having these lemmas at hand, the proof of Lemma \ref{maxOrl} is easily completed.

\begin{proof}[Proof of Lemma \ref{maxOrl}]
The random variables $\hat{X}_i \coloneqq X_i/K$ obviously satisfy the assumptions of Lemma \ref{auxlem2}. Hence, setting $Y \coloneqq \max_i |\hat{X}_i| = K^{-1} \max_i |X_i|$,
\[
\mathbb{P}(c_\alpha^{1/\alpha}Y \ge (\log n)^{1/\alpha} + t) \le 2 \exp(-t^\alpha).
\]
Therefore, we may apply Lemma \ref{auxlem3} to $\hat{Y} \coloneqq c_\alpha^{1/\alpha} K^{-1} \max_i |X_i|$. This yields
    \[
    \lVert \hat{Y} \rVert_{\Psi_\alpha} \le \tilde{c}_\alpha^{1/\alpha} \max\Big\{\Big(\frac{\sqrt{2}+1}{\sqrt{2}-1}\Big)^{1/\alpha},(\log n)^{1/\alpha}\Big(\frac{2}{\log 2}\Big)^{1/\alpha}\Big\},
    \]
i.\,e.\ the claim of Lemma \ref{maxOrl}, where we have set $C \coloneqq (\tilde{c}_\alpha c_\alpha^{-1})^{1/\alpha}$.
\end{proof}

\end{document}